\title[Specialization and divisibility for elliptic surfaces]{Specializations of elliptic surfaces, and divisibility in the Mordell-Weil group}
\author{Patrick Ingram}
\address{Department of Pure Mathematics, University of Waterloo}
\email{pingram@math.uwaterloo.ca}
\date{\today}
\newcommand{\QQ}{\mathbb{Q}}
\newcommand{\ZZ}{\mathbb{Z}}
\newcommand{\CC}{\mathbb{C}}
\newcommand{\RR}{\mathbb{R}}
\newcommand{\PP}{\mathbb{P}}
\newcommand{\Ocal}{\mathcal{O}}
\newcommand{\Es}{\mathcal{E}}
\newcommand{\Gal}{\operatorname{Gal}}
\newcommand{\Spec}{\operatorname{Spec}}
\newcommand{\GL}{\operatorname{GL}}
\newcommand{\SL}{\operatorname{SL}}
\newcommand{\ord}{\operatorname{ord}}
\newcommand{\MOD}[1]{~(\textup{mod}~#1)}
\newcommand{\h}{\hat{h}}
\newcommand{\im}{\operatorname{im}}
\newcommand{\pf}{\mathfrak{p}}
\newtheorem{theorem}{Theorem}
\newtheorem*{thm}{Theorem}
\newtheorem{lemma}[theorem]{Lemma}
\theoremstyle{remark}
\newtheorem*{remark}{Remark}
\newtheorem*{ack}{Acknowledgements}
\begin{document}

\begin{abstract}
Let $\Es\rightarrow C$ be an elliptic surface, defined over a number field $k$, let $P:C\rightarrow \Es$ be a section, and let $\ell$ be a rational prime.  
We bound the number of points of low algebraic degree in the $\ell$-division hull of $P$ at the fibre $\Es_t$.  Specifically, for 
$t\in C(\overline{k})$ with $[k(t):k]\leq B_1$ such that $\Es_t$ is non-singular, we obtain a bound on the number of $Q\in \Es_t(\overline{k})$ such that $[k(Q):k]\leq B_2$, and such that $\ell^nQ=P_t$, for some $n\geq 1$.  This bound depends on $\Es$, $P$, $\ell$, $B_1$, and $B_2$, but is independent of $t$.
\end{abstract}

\maketitle

\section{Introduction}

 One of the central problems in the study of elliptic surfaces is to determine the extent to which the geometry of the surface determines the arithmetic of its fibres.
Let $\Es\rightarrow C$ be an elliptic surface, defined over a number field $k$.  
Then if the fibre $\Es_t$ above $t\in C(k)$ is  non-singular,
there is a homomorphism 
\[\sigma_t:\Es(C)\rightarrow \Es_t(k),\]
where $\Es(C)$ is the group of sections $P:C\rightarrow \Es$ (we include the existence of a section in our definition of an elliptic surface).  By a theorem of Silverman \cite[Chapter III, Theorem~11.4]{jhs_advanced}, this map is injective for all but finitely many $t
\in C(k)$.  The map $\sigma_t$ is not, in general, surjective, since the rank of $\Es_t(k)$ may exceed that of $\Es(C)$ (see \cite{salgado}), but another result of Silverman shows that for elliptic surfaces over $\PP^1_\QQ$, there are infinitely many fibres for which the image of $\sigma_t$ is, at least,  not divisible in $\Es_t(\QQ)$.
\begin{thm}[Silverman \cite{jhs_specialization}]
Let $\Es\rightarrow\PP^1$ be an elliptic surface defined over $\QQ$, with non-constant $j$-invariant.  Then there exist infinitely many $t\in \PP^1(\QQ)$ such that the quotient  $\Es_t(\QQ)/\sigma_t(\Es(\PP^1))$ is torsion-free.
\end{thm}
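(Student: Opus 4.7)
The plan is to reduce torsion-freeness of $\Es_t(\QQ)/\sigma_t(\Es(\PP^1))$ to a family of $\ell$-divisibility conditions, handle each via Hilbert's irreducibility theorem, and use Silverman's canonical-height specialization to bound the primes $\ell$ that can contribute at a fibre of given height.

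First, I would fix generators $P_1,\ldots,P_r$ of $\Es(\PP^1)$ (finitely generated by Lang--N\'eron). Torsion-freeness of the quotient is equivalent to demanding, for every prime $\ell$ and every $\mathbf a\in\ZZ^r$ with $\sum a_iP_i\notin\ell\,\Es(\PP^1)$, that the point $\sum a_i\sigma_t(P_i)\in\Es_t(\QQ)$ is not $\ell$-divisible in $\Es_t(\QQ)$, together with a finite check on torsion classes handled in the same way. For each such $(\ell,\mathbf a)$, the $\ell$-divisible locus is the image of the $\QQ$-points of a cover $C_{\ell,\mathbf a}\to\PP^1$ obtained by adjoining an $\ell$-division point of $\sum a_iP_i$ to the generic fibre $\Es_\eta$. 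Non-constancy of $j$ forces $\Es_\eta$ to be non-isotrivial, which for all but finitely many $\ell$ makes $C_{\ell,\mathbf a}$ geometrically irreducible of degree $>1$; Hilbert's irreducibility theorem then renders the corresponding set $B_{\ell,\mathbf a}\subset\PP^1(\QQ)$ thin.

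The subtlety is controlling all primes simultaneously, since a countable union of thin sets could exhaust $\PP^1(\QQ)$. I would proceed via a height count: a quantitative form of HIT bounds $|B_{\ell,\mathbf a}\cap\{H(t)\leq T\}|$ by $O(T^{3/2}\log T)$, with implied constant tame in $\ell$. Meanwhile, Silverman's specialization theorem gives $\hat h_{\Es_t}(\sum a_i\sigma_t(P_i))\sim\hat h_{\Es}(\sum a_iP_i)\cdot h(t)$, so an $\ell$-division point in $\Es_t(\QQ)$ would have canonical height $\ell^{-2}$ times this expression. Combined with an unconditional Lang-type lower bound on canonical heights of non-torsion rational points (Hindry--Silverman), this forces $\ell\ll\sqrt{h(t)\log h(t)}$, truncating the sum over primes to $O(\sqrt{\log T})$ values at height $\leq T$. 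Summation bounds the total bad-set count by $o(T^2)$, leaving infinitely many good $t$. The main obstacle is precisely this coordination of estimates: the quantitative form of HIT must be effective and uniform in $\ell$, and the Lang lower bound sharp enough (its constants depend on the conductor of $\Es_t$, which is itself polynomial in $h(t)$) that the $\ell$-sum converges faster than the $T^2$ growth of $|\PP^1(\QQ)\cap\{H(t)\leq T\}|$.
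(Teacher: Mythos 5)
This theorem is not proved in the paper under review; it is quoted from Silverman's 1985 paper, and the present author only remarks that Silverman's argument proceeds ``by studying the variation of the N\'eron--Tate height across fibres of $\Es$.'' So the comparison must be against Silverman's original argument, and your sketch is, in broad outline, in the same family: reduce to $\ell$-divisibility conditions for finitely many classes per prime, control the bad locus for each $(\ell,\mathbf a)$, and use specialization of canonical heights to limit which $\ell$ can matter at a given $t$. That part of your structure is sound.

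The gap is in the step that truncates the sum over $\ell$. You invoke an ``unconditional Lang-type lower bound on canonical heights (Hindry--Silverman)'' to deduce $\ell\ll\sqrt{h(t)\log h(t)}$, attributing the defect in the constants to the conductor of $\Es_t$. But the unconditional Hindry--Silverman lower bound is of the shape $\hat h_t(Q)\gg \sigma_t^{-c}\,h(j_{\Es_t})$, where $\sigma_t=\log|\Delta_t|/\log N_t$ is the Szpiro ratio of the fibre, and $\sigma_t$ is \emph{not} unconditionally bounded as $t$ ranges over $\PP^1(\QQ)$ — the conductor $N_t$ can be far smaller than the discriminant $\Delta_t$ when $\Delta_t$ is highly squareful, even inside a fixed one-parameter family. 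Thus the bound you write down for $\ell$ does not follow; without Szpiro's conjecture (or Lang's conjecture, which it implies in this context) you only get $\ell^2\ll\sigma_t^{c}$, and you have no control on $\sigma_t$ in terms of $h(t)$. This is not a fixable constant: it is exactly the obstruction that makes the truncation over $\ell$ nontrivial, and it is also why the dates should have raised a flag — Silverman's divisibility paper is from 1985, while Hindry--Silverman is from 1988, so the original argument cannot rest on it. A secondary issue, which you half-acknowledge, is the claimed uniformity in $\ell$ of the quantitative Hilbert irreducibility bound $O(T^{3/2}\log T)$: the degree and genus of the cover $C_{\ell,\mathbf a}\to\PP^1$ grow with $\ell$, and the standard effective bounds carry dependence on both, so ``tame in $\ell$'' requires an argument, not an assertion. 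Until the $\ell$-truncation is established by some route that does not pass through an unproved Lang-type inequality, the countable union of thin sets is not controlled and the proof does not close.
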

Note that we cannot replace `there exist infinitely many' with `for all but finitely many', in Silverman's result, since the elliptic surface defined over $\PP^1_\QQ$ by $\Es:y^2=x^3-tx+t$ has no section of order two, but $(\eta, 0)\in \Es_t(\QQ)$ is a point of order two whenever $t=\eta^3/(\eta-1)$.

Saying that $\Es_t(\QQ)/\sigma_t(\Es(\PP^1))$ is torsion-free amounts to saying that for any prime $\ell$, and any section $P:\PP^1\rightarrow\Es$, the specialization $P_t=\sigma_t(P)$ is divisible by $\ell$ in the Mordell-Weil group $\Es_t(\QQ)$ only if $P$ is already divisible by $\ell$ in the group $\Es(\PP^1)$ of sections.  Since Silverman's result only treats infinitely many of the fibres, however,  it is still conceivable that one could construct a section $P:\PP^1\rightarrow\Es$ that is not divisible by a prime $\ell$, but whose specializations $P_t\in\Es_t(\QQ)$ are divisible by arbitrarily large powers of $\ell$ (as $t$ varies).

Our main result is that this sort of construction is not possible, and we prove this for elliptic surfaces $\Es\rightarrow C$ over arbitrary base curves, defined over a number field $k$.  In fact, even if we are allowed to consider fibres and points of bounded algebraic degree over $k$, the extent to which the specializations of a given section $P:\Es\rightarrow C$ might be $\ell$-divisible is limited.

\begin{theorem}\label{main}
Let $k/\QQ$ be a number field, let $\Es\rightarrow C$ be an elliptic surface, with non-constant $j$-invariant, over the smooth projective curve $C$, and let $P:C\rightarrow\Es$ be a section (all defined over $k$).  
 Then for any $B_1, B_2\geq 1$, there is a value $M(B_1, B_2)$ such that
$$\#\Big\{Q\in \Es_t(\overline{k}):[k(Q):k]\leq B_1\text{ and }\ell^nQ=P_t\text{ for some }n\geq 1\Big\}
\leq M,$$
as $t\in C(\overline{k})$ varies over the places of good reduction for $\Es$ with $[k(t):k]\leq B_2$.
\end{theorem}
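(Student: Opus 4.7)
The plan is to separate the torsion and non-torsion cases of $P$ on the generic fibre $E = \Es\times_C \Spec k(C)$, and in the non-torsion case to reduce the problem to a uniform upper bound on the divisibility exponent $n$. If $P$ has finite order $N$, then $P_t$ has order dividing $N$ for every $t$ of good reduction, and any $Q$ with $\ell^n Q = P_t$ is torsion of order dividing $\ell^n N$; Merel's uniform boundedness of torsion then caps the possible order of such a $Q$ defined over a field of degree at most $B_1$ over $k$, so the set of solutions sits inside a torsion subgroup of $\Es_t$ of uniformly bounded size. So assume henceforth that $P$ is non-torsion.

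For non-torsion $P$, Silverman's specialization theorem on heights furnishes a constant $c_1=c_1(\Es,P,k)$ with $\h_{\Es_t}(P_t)\le c_1(h_C(t)+1)$ uniformly over fibres of good reduction with $[k(t):k]\le B_2$, where $h_C$ is a Weil height on $C$. The scaling $\h_{\Es_t}(\ell^n Q)=\ell^{2n}\h_{\Es_t}(Q)$ then gives $\h_{\Es_t}(Q)\le c_1\ell^{-2n}(h_C(t)+1)$. The central task is to extract a matching lower bound $\h_{\Es_t}(Q)\ge c_2>0$ uniform in $t$, and this is where I expect the main obstacle to lie: a naive Lehmer-type estimate of the form $\h_{\Es_t}(Q)\ge c/(\log h_C(t))^{\kappa}$ only yields $n=O(\log h_C(t))$, not a uniform bound, so one must use the geometry of the elliptic surface. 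My plan is to decompose $\h_{\Es_t}(Q)=\sum_v\hat\lambda_v(Q)$ into local canonical heights, and at each place $v$ where $\Es_t$ has multiplicative reduction of Kodaira type $I_{m_v}$ to exploit the congruence $\ell^n c_Q\equiv c_{P_t}\pmod{m_v}$ on component-group classes. When $\ell\mid m_v$ this constrains $c_Q$ strongly, and summing the resulting local contributions over the bad places (whose number and depths grow linearly in $h_C(t)$ by non-isotriviality of $\Es\to C$) should produce a positive absolute lower bound on $\h_{\Es_t}(Q)$, forcing $\ell^{2n}$ to be uniformly bounded and hence $n\le N=N(\Es,P,\ell,B_1,B_2)$.

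Once $n$ is uniformly bounded, the counting step is routine: two solutions $Q,Q'$ at a common level differ by an element of $\Es_t[\ell^N]$ defined over the compositum $k(Q,Q')$, which has degree at most $B_1^2$ over $k$, and Merel's theorem bounds this torsion uniformly in $t$; summing over $n\le N$ gives the required $M$. The hardest step is therefore the local-height estimate of the previous paragraph, where the essential point seems to be that, because $\Es\to C$ is non-isotrivial, the bad-reduction contributions to $\h_{\Es_t}(Q)$ scale with $h_C(t)$ in the same way as those to $\h_{\Es_t}(P_t)$, preventing $\h_{\Es_t}(Q)$ from becoming arbitrarily small when $n$ grows.
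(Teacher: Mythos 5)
Your treatment of the torsion case via Merel is fine and matches what the paper does. But the non-torsion case contains a genuine gap, and your overall strategy is quite different from the paper's.

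The crux of your argument is the claim that one can prove a lower bound of the form $\h_{\Es_t}(Q)\gg h_C(t)$ (or even $\h_{\Es_t}(Q)\ge c_2>0$) uniformly over $t$, for $Q$ in the $\ell$-division hull of $P_t$, and that this can be extracted from local canonical heights at multiplicative fibres via component-group congruences. This is where the proposal fails. A uniform positive lower bound on $\h_{\Es_t}(Q)$ that scales with $h_C(t)$ is a version of Lang's height conjecture for non-isotrivial families; it is a deep open problem and certainly does not follow from the component-group book-keeping you describe. Concretely: the congruence $\ell^n c_Q\equiv c_{P_t}\pmod{m_v}$ becomes weaker, not stronger, as $n$ grows, so it cannot yield an $n$-independent lower bound; the local contributions at multiplicative places vanish entirely whenever $Q$ reduces to the identity component (which is a positive-density event in $\Es_t[\ell^n]$-translates); and the nonnegative non-archimedean pieces can be offset by negative archimedean local heights, so bounding some local terms from below does not bound the global height from below. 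Moreover, the very conclusion you are aiming for --- a uniform bound $n\le N(\Es,P,\ell,B_1,B_2)$ over all $t$ of bounded degree --- is false: for any $N$, the curve $\Gamma_N=[\ell]^{-N}\Gamma_0$ over $C$ has points of bounded degree, and at those $t$ one has solutions with $n\ge N$. Any correct argument must isolate these $t$ into a finite exceptional set.

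The paper's proof takes a completely different route. It forms the tower of preimage curves $\Gamma_0\leftarrow\Gamma_1\leftarrow\cdots$ with $\Gamma_{n+1}=[\ell]^{-1}\Gamma_n\subset\Es$, shows via a Kummer-theoretic argument over $K_\CC=k(C)\otimes_k\CC$ (Lemmas \ref{open_image}--\ref{orbits}) that the tree of irreducible components of $\widetilde{\Gamma}_n$ has only finitely many infinite branches, and shows via Tate uniformization at a multiplicative fibre (Lemma \ref{ramificationtree}) that the maps $\widetilde{\Gamma}_{n+1}\to\widetilde{\Gamma}_n$ ramify enough that $\deg R_{\phi_n}/2\deg\phi_n$ grows geometrically. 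Then Lemma \ref{preim}, a Vojta-type diophantine result, gives an $N(B)$ beyond which $\widetilde{\Gamma}_{N(B)}$ has only finitely many points of degree $\le B$. This shows $n\le n_0$ except for $t$ in a finite exceptional set $Y_B$, and for $t\in Y_B$ the count is handled by the elementary observation that $\h_t(Q)\le\h_t(P_t)$, which is a single fixed number for each such $t$. The heavy lifting is diophantine geometry on the preimage curves, not an analytic lower bound on heights of individual points, and no Lang-type statement is needed.
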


In general, this finiteness result is the best we can do, since one is free to choose $P=\ell^NP_0$ for some section $P_0:C\rightarrow\Es$, and $N$ arbitrarily large, ensuring that the sets in the theorem have size at least $N$.  If, however, we cast out finitely many primes, and finitely many fibres, and consider only $k$-rational points, we
obtain something much more explicit.  We call the prime $\ell$ a \emph{special prime} for the elliptic surface $\Es$ if it is one of the finitely many primes such that either
 $\ell= 2$, or
 the $j$-invariant $j_\Es:C\rightarrow\PP^1$ has a pole of order divisible by $\ell$.
\begin{theorem}\label{special_th}
Let $\Es$ and $P$ be as above,  suppose that $\ell$ is not a special prime for $\Es$, and suppose further that $P$ is not of the form $\ell P_0$, for
any section $P_0:C\rightarrow \Es$.  Then
$$\#\Big\{Q\in \Es_t(k):\ell^n Q=P_t\text{ for some }n\geq 1\Big\}\leq \ell^2$$
for all but finitely many $t\in C(k)$.  Moreover, the upper bound can be replaced with $0$ if $C$ has genus at least 1, or if the $j$-invariant  $j_\Es$ has at least $5$ distinct poles in $C(\overline{k})$ (4 poles suffice if $\ell=5$, and 3 suffice if $\ell\geq 7$).
\end{theorem}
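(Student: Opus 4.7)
My approach is to study the incidence correspondence $Y_n \subset \Es$ defined as $[\ell^n]^{-1}(P(C))$, the preimage of the image of the section under multiplication by $\ell^n$. This is equipped with a natural projection $Y_n \to C$ of degree $\ell^{2n}$, and its $k$-rational points lying over $t \in C(k)$ correspond bijectively to $Q \in \Es_t(k)$ with $\ell^n Q = P_t$. To prove the first assertion of the theorem, I observe that if $P_t \notin \ell^2 \Es_t(k)$ then the set in the theorem is either empty or equal to a single coset of $\Es_t[\ell](k)$, hence of size at most $|\Es_t[\ell]| = \ell^2$. Thus it suffices to show that $Y_2(k)$ projects to only finitely many points of $C(k)$.

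Because $P$ is not of the form $\ell P_0$ for any $P_0 \in \Es(C)$ (and hence is not $\ell^2 P_0'$ either), no irreducible component of $Y_2$ projects isomorphically onto $C$, so each component has degree $\geq 2$ over $C$. I would then use Riemann--Hurwitz to estimate the genus of each component from below, with the ramification of $Y_2 \to C$ supported over the singular fibres of $\Es \to C$. The hypothesis that $\ell$ is not a special prime, i.e.\ $\ell \neq 2$ and $\ell \nmid \operatorname{ord}_v(j_\Es)$ at each pole $v$, ensures via Kodaira's classification that the ramification is tame with a definite positive contribution at each bad fibre. Combined with non-constancy of $j_\Es$, this is enough to force every irreducible component of $Y_2$ to have genus $\geq 2$, and Faltings' theorem then delivers the required finiteness.

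The ``moreover'' clause follows by the same argument applied to $Y_1$ instead of $Y_2$. Concretely, for an irreducible component $Y_1^{(i)}$ of degree $d_i$ over $C$, Riemann--Hurwitz gives
\[
2 g(Y_1^{(i)}) - 2 \;\geq\; d_i \bigl(2 g(C) - 2\bigr) + R_i,
\]
where $R_i$ is the ramification contribution from the poles of $j_\Es$. When $g(C) \geq 1$, the first term is non-negative, and any positive $R_i$ (guaranteed by $j_\Es$ non-constant) already yields $g(Y_1^{(i)}) \geq 2$. When $g(C) = 0$ we need $R_i > 2 d_i + 2$, and the per-pole ramification contribution, which grows quadratically in $\ell$ via the tame local monodromy structure of the $\ell$-torsion at multiplicative fibres, delivers the thresholds of five poles in general, four poles for $\ell = 5$, and three poles for $\ell \geq 7$.

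The main technical difficulty is the local ramification analysis at each singular fibre: one must step through Kodaira's classification, determine the structure of $\ell^n$-torsion on the Néron model of $\Es$, and translate this into ramification data for the torsor $Y_n \to C$. The non-special hypothesis is exactly what rules out wild ramification and the degenerate behaviour when $\ell$ divides the order of a pole of $j_\Es$ at a multiplicative fibre, where the $\ell$-torsion group scheme becomes too unramified to give the Riemann--Hurwitz contribution we need. A secondary subtlety is that $Y_n$ may well be reducible; each irreducible component must be treated separately, and the non-divisibility hypothesis on $P$ is used precisely to exclude ``trivial'' components that would project isomorphically onto $C$.
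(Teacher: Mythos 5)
Your reduction to the finiteness of $Y_2(k)$ is sound, and the decision to treat $Y_2$ (resp.\ $Y_1$ for the ``moreover'' clause) and apply Faltings is exactly the paper's strategy. The gap is in the step from ``$P$ is not an $\ell$th multiple'' to a genus bound. You argue only that no component of $Y_2$ projects isomorphically onto $C$, so each component has degree $\geq 2$ over $C$. That is far too weak: a degree-$2$ cover of $\PP^1$ has genus $\geq 2$ only if its ramification divisor has degree $\geq 6$, and nothing in your set-up forces that. A priori the Galois action on $[\ell]^{-1}P$ could have a small orbit, giving a low-degree component whose Riemann--Hurwitz contribution is insufficient. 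The non-special hypothesis is not merely ruling out wild ramification (as you suggest); it is the input to the Galois-theoretic half of the argument that forces transitivity.

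What the paper does, and what your plan is missing, is the following. By a theorem of Igusa and Cox--Parry, when $\ell$ is non-special the mod-$\ell^n$ representation of $\Gal(\overline{K_\CC}/K_\CC)$ is surjective onto $\SL_2(\ZZ/\ell^n\ZZ)$. Combined with a vanishing statement for $H^1(\Gal,E[\ell^n])$ (proved by a Sah-type argument, since $-I$ is central in $\SL_2$) and the hypothesis $P\notin\ell E(K_\CC)$, one shows via Kummer theory that the Galois group of $K_\CC([\ell]^{-n}P,E[\ell^n])/K_\CC$ is the full semidirect product $(\ZZ/\ell^n\ZZ)^2\rtimes\SL_2(\ZZ/\ell^n\ZZ)$, which acts transitively on $[\ell]^{-n}P$. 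Hence $\widetilde\Gamma_n$ is \emph{irreducible} of degree $\ell^{2n}$ over $C$ — not just free of degree-$1$ components. Only then does one run the ramification analysis: at a multiplicative pole $v$ of $j_\Es$ with $v(j_\Es)$ prime to $\ell$, Tate's uniformization (not Kodaira's tables plus vague ``local monodromy'') gives an explicit ramification tree (Lemma~\ref{ramificationtree}), with the result that the ramification divisor of $\widetilde\Gamma_2\to\widetilde\Gamma_1$ has degree $\geq 2\ell(\ell-1)^2$, and of $\widetilde\Gamma_1\to C$ has degree $\geq N(\ell-1)^2$ where $N$ is the number of poles. Plugging these into Hurwitz with degree $\ell^2$ yields $g(\widetilde\Gamma_2)\geq\ell^3-3\ell^2+\ell+1\geq 4$ in general, and $g(\widetilde\Gamma_1)\geq 1-\ell^2+\frac12 N(\ell-1)^2\geq 2$ under the stated pole-count hypotheses. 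You also omit the case where a pole of $j_\Es$ is a fibre of additive reduction; the paper handles it by passing to a quadratic twist with multiplicative reduction and transporting the ramification computation across the (totally ramified) degree-$2$ extension. Your sketch correctly identifies where the technical work lies but does not supply the irreducibility or the quantitative ramification data, both of which are essential and neither of which follows from the considerations you name.
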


Of course, this theorem is trivially true if $C$ has genus 2 or greater.

\begin{remark}
The proof of Theorem~\ref{special_th} is a modification of the proof of Theorem~\ref{main}, and with slightly more work, one can obtain a version for
points of bounded degree over $k$.  Specifically, in proving the first claim of Theorem~\ref{special_th}, we actually prove  (under the conditions of the theorem) that $\ell^n Q=P_t$ implies $n\leq 1$, except on fibres corresponding to finitely many $t\in C(k)$.  If the argument is extended, one can show that for all but finitely many $t\in C(k)$, if  $Q\in \Es_t(\overline{k})$ with $[k(Q):k]\leq D$, then we have $\ell^n Q=P_t$ only if $n<\log_2 D+5$.  Note that, if we are allowed to consider points with $[k(Q):k]\leq D$, then we can find examples with $\ell^n Q=P_t$ for any $n\leq \log D/(2\log\ell)$.
\end{remark}

There is one important case in which Theorem~\ref{special_th} does not apply: if we'd like to discuss torsion on specializations of an elliptic surface, then we should like to apply Theorem~\ref{special_th} with $P=\Ocal$, the identity section.  However, it is always true that $\Ocal=\ell\Ocal$.  As it happens, this is not a fundamental obstacle.  For any finite set $S$ of rational primes, and any elliptic curve $E$, let $E^{\mathrm{Tors}, S}$ denote the $S$-primary torsion on $E$, that is, the union of $E[N]$, as $N\in\mathbb{N}$ ranges over $S$-units.
\begin{theorem}\label{torsion}
Let $\Es$ be as above, and suppose that $j_\Es$ has at least 5 distinct poles in $C(\overline{k})$.  Then for all but finitely many $t\in C(k)$, the torsion subgroup of $\Es_t(k)$ is exactly $\Es_t^{\mathrm{Tors}, S}(k)$, where $S$ is the set of special primes for $\Es$.
\end{theorem}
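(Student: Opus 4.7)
The plan is to reduce Theorem~\ref{torsion} to a prime-by-prime finiteness assertion, handle the infinitely many primes via Merel's theorem, and then for each remaining prime exploit the geometry of the $\ell$-torsion cover together with Faltings' theorem.

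By Merel's theorem there is a constant $N_0 = N_0([k:\QQ])$ with $|E(k)_{\mathrm{tors}}| \leq N_0$ for every elliptic curve $E/k$. Any prime $\ell$ that divides the order of a $k$-rational torsion point on some fibre $\Es_t$ therefore satisfies $\ell \leq N_0$, and the torsion subgroup of $\Es_t(k)$ agrees with $\Es_t^{\mathrm{Tors},S}(k)$ exactly when $\Es_t(k)[\ell]=0$ for every $\ell \notin S$. Hence it suffices to show, for each of the finitely many primes $\ell \leq N_0$ with $\ell \notin S$, that
$$T_\ell := \bigl\{ t \in C(k) : \Es_t(k)[\ell] \neq 0 \bigr\}$$
is finite; the exceptional set of the theorem is then the union of these $T_\ell$ together with the finite set of places of bad reduction for $\Es$.

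Fix such a prime $\ell$. The $\ell$-torsion group scheme $\Es[\ell] \subset \Es$ is finite over $C$ of degree $\ell^2$ and \'etale over the smooth locus. Write $X_\ell := \Es[\ell] \setminus \{O\}$ and decompose it into $k$-irreducible components $X_\ell = \bigsqcup_i Y_i$, with each $Y_i$ a smooth projective $k$-curve carrying a finite $k$-morphism $\pi_i : Y_i \to C$. A $k$-point of $X_\ell$ lying over $t \in C(k)$ is precisely a non-zero $k$-rational $\ell$-torsion point of $\Es_t$; hence, if every $Y_i$ has geometric genus at least $2$, then Faltings' theorem renders each $Y_i(k)$ finite, the image of $X_\ell(k)$ in $C(k)$ is finite, and $T_\ell$ is finite. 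The genus of each $Y_i$ is in turn controlled by Riemann--Hurwitz applied to $\pi_i$, with ramification supported over the singular fibres of $\Es$. At a pole of $j_\Es$ of order $n$ (Kodaira type $I_n$ or $I_n^*$), the local monodromy on $\Es[\ell]$ is conjugate to a matrix of the form $\bigl(\begin{smallmatrix} \pm 1 & \pm n \\ 0 & \pm 1 \end{smallmatrix}\bigr)$. Since $\ell \notin S$ forces $\ell \neq 2$ and $\ell \nmid n$, this matrix acts with order $\ell$ on $\Es[\ell] \setminus \{O\}$, fixing the $\ell - 1$ non-zero vectors of an eigenline and partitioning the remaining $\ell^2 - \ell$ into orbits of length $\ell$. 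Each of the at least five distinct poles therefore contributes ramification of index $\ell$ on every $Y_i$ meeting its non-fixed orbits, and a count entirely analogous to the one underlying the last assertion of Theorem~\ref{special_th} yields $g(Y_i) \geq 2$ (with the 4- and 3-pole refinements for $\ell = 5$ and $\ell \geq 7$).

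The main obstacle, exactly as in Theorem~\ref{special_th}, is this Riemann--Hurwitz computation, whose most delicate aspect is excluding degree-$1$ components of $X_\ell$. Such a component would arise from a $k(C)$-rational $\ell$-torsion section of $\Es$; its specializations would produce $\ell$-torsion on almost every fibre and directly contradict the theorem. The 5-pole hypothesis (or its refinements) is precisely what rules this out, via the modular interpretation: such a section induces a non-constant morphism $C \to X_1(\ell)$ whose ramification profile at the cusps must be compatible with the pole data of $j_\Es$, and the required pole count is incompatible with the geometry of $X_1(\ell)$. Beyond this, one must verify the genus bound uniformly on each higher-degree $Y_i$, accounting for the orbit structure of the global monodromy image in $\operatorname{GL}_2(\FF_\ell)$ and the behaviour at additive-but-potentially-multiplicative fibres of type $I_n^*$.
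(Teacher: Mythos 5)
Your high-level structure matches the paper's: reduce via Merel to finitely many primes $\ell \notin S$, then show for each such $\ell$ that the relevant curve in $\Es[\ell]\setminus\{\Ocal\}$ has genus $\geq 2$ and apply Faltings. The local ramification picture you describe (unipotent monodromy $\pm\bigl(\begin{smallmatrix}1 & n\\ 0 & 1\end{smallmatrix}\bigr)$ with $\ell - 1$ fixed nonzero vectors and $\ell-1$ orbits of length $\ell$) is also consistent with what the paper derives from the Tate uniformization in Lemma~\ref{ramificationtree}.

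However, there is a genuine gap at the step you correctly flag as ``the most delicate aspect'': controlling the component structure of $X_\ell = \Es[\ell]\setminus\{\Ocal\}$. You propose to exclude degree-1 components by a modular-curve argument (non-constant $C\to X_1(\ell)$ incompatible with the pole data of $j_\Es$), and then explicitly leave open the verification of the genus bound ``uniformly on each higher-degree $Y_i$.'' Neither part is actually carried out, and the modular argument by itself does not suffice for small non-special $\ell$ (for $\ell = 3, 5, 7$ the curve $X_1(\ell)$ has genus $0$, so no contradiction arises from the existence of a map $C\to X_1(\ell)$ alone; one would have to do a delicate ramification analysis tied to cusp widths and the $5$-pole hypothesis, which you only gesture at). The paper closes this gap by a different and cleaner tool: Lemma~\ref{open_image} (Igusa, Cox--Parry) shows that for non-special $\ell$ the mod-$\ell$ Galois representation of $E/K_\CC$ is \emph{surjective} onto $\SL_2(\ZZ/\ell\ZZ)$. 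Transitivity of $\SL_2(\FF_\ell)$ on $\FF_\ell^2\setminus\{0\}$ then makes $\Es[\ell]\setminus\{\Ocal\}$ geometrically irreducible, so there is exactly one component $\Gamma$ to deal with (no decomposition into $Y_i$, hence no need to distribute ramification among components), and the Riemann--Hurwitz computation using the $(\ell-1)^2$ ramification contribution per pole, with the $5$-pole hypothesis, yields $g(\Gamma)\geq 2$. In short: the missing ingredient in your proposal is precisely the monodromy surjectivity of Lemma~\ref{open_image}; once you invoke that, the component-counting problem disappears and your Riemann--Hurwitz estimate goes through. The modular map $\Gamma\to X_1(\ell)$ does give a quick genus bound, but only for $\ell\geq 13$, which the paper notes as a closing remark rather than as part of the proof.
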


If $E/k$ is an elliptic curve over a number field, then one might consider $E$ as an arithmetic surface $E\rightarrow \Spec(R)$, where $R$ is the ring of
integers of $k$, and ask if the result
analogous to Theorem~\ref{main} holds.
Indeed, results of this general type, that is, local-to-global results about divisibility in the Mordell-Weil group, have already been considered over number fields, for example, the work of Banaszak, Gajda, and Kraso\'{n} \cite{gajda}.   However, for elliptic curves over number fields, it is quite easy to show that something rather different from Theorem~\ref{main} is true.

\begin{theorem}\label{cheb}
Let $k$ be a number field and let $E/k$ be an elliptic curve.  If $P\in E(k)$ is a point of infinite order, and $\ell$ is any rational prime, then for any $M\geq 1$, we may choose an infinite set of primes $S_M$ such that
$$\#\Big\{Q\in E_\pf(k_\pf):\ell^nQ=P_\pf\text{ for some }n\geq 1\Big\}\geq M$$
for all $\pf\in S_M$.  Moreover, we can choose our set of primes to have density $M^{-\frac{2}{5}+o(1)}$, where $o(1)\rightarrow 0$ as $M\rightarrow\infty$.
\end{theorem}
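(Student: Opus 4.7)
The plan is to apply Chebotarev's density theorem to a Kummer-type extension of $k$, obtained by adjoining the $\ell^n$-torsion of $E$ together with a single $\ell^n$-division point of $P$, for a suitable $n$ depending on $M$. Given $M\geq 1$, I would choose $n$ so that $\ell^{2n}\geq M$, fix $Q_0\in E(\overline{k})$ with $\ell^n Q_0=P$, and set $L_n=k(E[\ell^n],Q_0)$, a finite Galois extension of $k$. For any prime $\pf$ of good reduction for $E$ that splits completely in $L_n$, both $E[\ell^n]$ and $Q_0$ reduce to $k_\pf$-rational objects, so the coset $\overline{Q_0}+E_\pf[\ell^n]\subset E_\pf(k_\pf)$ provides $\ell^{2n}\geq M$ distinct $Q$ with $\ell^n Q=P_\pf$. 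By Chebotarev, such primes form a set of positive density $1/[L_n:k]$, which already establishes the first (infinitude) assertion.

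The quantitative density $M^{-2/5+o(1)}$ is the main obstacle. The naive splits-completely count gives density only $[L_n:k]^{-1}\gg M^{-3}$, using Serre's open image theorem and Bashmakov--Ribet-type bounds on Kummer extensions in the non-CM case. To do better, one should relax the Chebotarev condition: writing $\sigma_\pf\in\GL_2(\ZZ/\ell^n\ZZ)$ for the Frobenius action on $E[\ell^n]$ and $v_\pf\in E[\ell^n]$ for the ``Kummer translation'' describing its action on $Q_0$, the set $\{Q\in E_\pf(k_\pf):\ell^nQ=P_\pf\}$ is non-empty precisely when $v_\pf\in\im(\sigma_\pf-1)$, and in that case its size equals $|\ker(\sigma_\pf-1)|$. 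It therefore suffices to force $|\ker(\sigma_\pf-1)|\geq M$ and $v_\pf\in\im(\sigma_\pf-1)$, not that Frobenius act trivially on $E[\ell^n]$.

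The remaining task is a combinatorial optimization inside $\Gal(L_n/k)\subset\GL_2(\ZZ/\ell^n\ZZ)\ltimes E[\ell^n]$: for each $s$ with $\ell^s\geq M$, estimate the proportion of $(\sigma,v)$ with $|\ker(\sigma-1)|\geq\ell^s$ and $v\in\im(\sigma-1)$, then choose $s$ (and hence $n$) optimally. I expect the hard part to be (i) the conjugacy-class count in $\GL_2(\ZZ/\ell^n\ZZ)$, balanced against the Kummer-compatibility condition on $v$; (ii) the separate analysis in the CM case, where the image of Galois in $\GL_2$ has smaller dimension and the bookkeeping shifts; and (iii) ensuring that the Galois-image hypotheses on $(E,P,\ell)$ are not violated for the given curve. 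The exponent $-2/5$ should emerge from this balance, possibly sharpened by effective Chebotarev.
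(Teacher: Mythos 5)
Your argument for the infinitude assertion is the paper's argument verbatim: the paper sets $n$ to be the least integer with $M<\ell^{2n}$, takes $F=k(E[\ell^n],[\ell]^{-n}P)$ (which is your $L_n$, since $[\ell]^{-n}P=Q_0+E[\ell^n]$), and applies Chebotarev to the primes that split completely in $F$, obtaining at least $\ell^{2n}>M$ points $Q$ with $\ell^n Q=P_\pf$ on each such fibre.

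For the density assertion you were right to be suspicious. The paper's own proof is exactly the naive one you describe: it bounds the density below by $[F:k]^{-1}$ with $[F:k]\leq\#\bigl(E[\ell^n]\rtimes\GL_2(\ZZ/\ell^n\ZZ)\bigr)$. But the computation contains two slips. First, the factor $\ell^{3n-2}(\ell^2-1)$ written for $\#\GL_2(\ZZ/\ell^n\ZZ)$ is actually $\#\SL_2(\ZZ/\ell^n\ZZ)$; over a number field the mod-$\ell^n$ cyclotomic character need not be trivial, and the correct count is $\ell^{4n-3}(\ell-1)(\ell^2-1)$, so that $[F:k]\leq\ell^{6n}$ rather than $\ell^{5n}$. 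Second, even granting $[F:k]\leq\ell^{5n}$, the inequality $M\geq\ell^{2(n-1)}$ gives $\ell^{-5n}\geq M^{-\frac{5}{2}\cdot\frac{n}{n-1}}$, and the exponent $\frac{5}{2}\cdot\frac{1}{1-1/n}$ is the reciprocal of the $\frac{2}{5}\bigl(1-\frac{1}{n}\bigr)$ that appears in the text. Thus the naive argument, written correctly, yields density $\gg M^{-3+o(1)}$ (or $M^{-5/2+o(1)}$ with the $\SL_2$ count), not $M^{-2/5+o(1)}$ — exactly the $M^{-3}$ obstruction you flagged. Your proposed refinement, weakening full splitting to the conditions $|\ker(\sigma_\pf-1)|\geq M$ and $v_\pf\in\im(\sigma_\pf-1)$ and then optimizing over the group $E[\ell^n]\rtimes\GL_2(\ZZ/\ell^n\ZZ)$, is a reasonable route toward a stronger exponent, and your description of when the fibre set is nonempty and of its cardinality is correct; but since you do not carry out the resulting combinatorial estimate, your proposal, like the paper's proof as written, does not establish the stated exponent $-\tfrac{2}{5}$.
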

The results of Banaszak, Gajda, and Kraso\'{n} \cite{gajda} are largely Galois-theoretic, while those of Silverman \cite{jhs_specialization} are obtained by studying the variation of the N\'{e}ron-Tate height across fibres of $\Es$.  By contrast, Theorems~\ref{main} and \ref{special_th} require
require a mixture of Galois theory and some deep results in diophantine geometry.  Since it may be of independent interest, we mention here the main diophantine lemma used, which is a slight adjustment of Lemma 4.5 of \cite{preimpaper}, which in turn derives from work of Vojta \cite{vojta}, and Song and Tucker \cite{tucker}.

\begin{lemma}\label{preim}
Let
$$C_0\stackrel{\phi_1}{\longleftarrow}C_1\stackrel{\phi_2}{\longleftarrow}C_2\stackrel{\phi_3}{\longleftarrow}\cdots$$
be a tower of (smooth, projective) curves connected by non-constant morphisms, defined over the number field $k$.  Let $R_{\phi_n}$ denote the ramification divisor of $\phi_n$,
and suppose that there are constants $c_1> 0$ and $c_2$ such that
\[\frac{\deg R_{\phi_n}}{2\deg \phi_n}\geq c_12^n-c_2,\]
for all $n$.   Then for each $B\geq 1$, there exists an $N(B)$ such that $C_{N(B)}(\overline{k})$ contains at most finitely many points $Q$ with $[k(Q):k]\leq B$.
\end{lemma}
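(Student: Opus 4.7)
The strategy is to combine a Riemann--Hurwitz computation for the composed covers with a Vojta-type theorem governing points of bounded degree on curves of large ramification. Write $\psi_n = \phi_1 \circ \cdots \circ \phi_n : C_n \to C_0$, so that $R_{\psi_n} = R_{\phi_n} + \phi_n^* R_{\psi_{n-1}}$ and $\deg\psi_n = \deg\phi_n \cdot \deg\psi_{n-1}$. Iterated Riemann--Hurwitz then gives
\[
\frac{2g(C_n)-2}{2\deg\psi_n} = \bigl(g(C_0)-1\bigr) + \sum_{i=1}^n \frac{\deg R_{\phi_i}}{2\deg\phi_i\cdot\deg\psi_{i-1}},
\]
and the hypothesis $\deg R_{\phi_n}/(2\deg\phi_n)\geq c_1 2^n-c_2$, together with the fact that each $\deg\phi_i\geq 2$ so $\deg\psi_{i-1}\leq 2^{i-1}$, yields an unbounded lower bound on both $g(C_n)$ and on the normalized ramification $\deg R_{\psi_n}/\deg\psi_n$; in fact it grows like a positive constant times $2^n$. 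This is the sole use of the numerical hypothesis, and the exponential rate is exactly what the diophantine input in the next step demands.

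Next I would apply the Vojta-style inequality of \cite{vojta}, in the strengthening due to Song and Tucker \cite{tucker} (and already adapted to tower situations as Lemma~4.5 in \cite{preimpaper}). For a fixed $B\geq 1$ and a cover $\psi: C'\to C$ of smooth projective curves over $k$, this inequality compares heights with respect to $K_{C'}$ and $\psi^* D$ for $D$ ample on $C$, and asserts in particular that, outside a finite exceptional set depending on $B$, any point $Q\in C'(\overline{k})$ with $[k(Q):k]\leq B$ satisfies an estimate of the shape
\[
h_{K_{C'}}(Q) \leq \bigl(1+\epsilon\bigr)\,\frac{\deg K_{C'}}{\deg \psi^*D}\,h_{\psi^*D}(Q) + O_{\epsilon,B}(1),
\]
where the implicit constants can be tracked through the tower. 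Plugging in $C' = C_n$, $C = C_0$, and $\psi = \psi_n$, and using $\deg K_{C_n} = (2g(C_0)-2)\deg\psi_n + \deg R_{\psi_n}$, one sees that once the normalized ramification on $C_n$ passes the threshold prescribed by $B$ and the height comparison constants on $C_0$, the inequality becomes incompatible with an infinite set of bounded-degree points mapping down to $C_0$. The index $N(B)$ is then defined as the first $n$ at which $c_1 2^n - c_2$ exceeds that threshold.

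The main obstacle I expect is handling the exceptional sets uniformly along the tower: at each level one discards a finite set of bad points where the Vojta inequality fails, and one must make sure that the error terms and the $\epsilon$ can be chosen independently of $n$ so that the decisive comparison at level $N(B)$ is actually effective. This is precisely the purpose of the formulation in \cite{preimpaper}, and the ``slight adjustment'' alluded to in the paper is presumably a relaxation of the ramification growth hypothesis from the form used there to the sharper bound $c_1 2^n - c_2$ stated here; once that bookkeeping is in place, the conclusion follows directly.
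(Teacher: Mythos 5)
Your approach rests on showing that the cumulative ramification ratio $\rho(\psi_n)=\deg R_{\psi_n}/(2\deg\psi_n)$ grows without bound, so that a single application of the Vojta--Song--Tucker inequality at level $N$ would suffice. Two problems prevent this from working. First, there is a sign error in the justification: since each $\deg\phi_j\geq 2$, we have $\deg\psi_{i-1}=\prod_{j<i}\deg\phi_j\geq 2^{i-1}$, not $\leq 2^{i-1}$. With the inequality corrected, the iterated Riemann--Hurwitz identity
\[
\rho(\psi_n)=\sum_{i=1}^n\frac{\rho(\phi_i)}{\deg\psi_{i-1}}
\]
need not diverge at all: if $\deg\phi_i=d$ for all $i$ with $d\geq 3$, and $\rho(\phi_i)$ grows only at the minimal guaranteed rate $c_1 2^i$, then the $i$th summand is $\ll(2/d)^i$ and the series converges, so $\rho(\psi_n)$ remains bounded. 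This is exactly the situation in the paper's application, where $\deg\phi_i=\ell^2$ with $\ell\geq 3$. So the quantity you propose to send to infinity may well stay bounded, and your threshold at level $N$ is never crossed.

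The reason the paper quotes Lemma~4.5 of \cite{preimpaper} as a black box, rather than a one-shot Vojta estimate, is that the lemma iterates the inequality up the tower: its controlling quantity is $B_N=\min_{1\leq m\leq N}2^{N-m}\rho(\phi_m)$, built from the \emph{per-step} ratios $\rho(\phi_m)$ rather than from $\rho(\psi_N)$, and this does grow like $2^N$ once each $\rho(\phi_m)$ is bounded below by $c\,2^m$ --- a strictly stronger conclusion than any bound on the cumulative ratio. Two further technicalities then arise which you also need to confront and which the paper handles explicitly: (i) for small $m$ the ratio $\rho(\phi_m)$ may be small or zero (the hypothesis only gives $\rho(\phi_m)\geq c_1 2^m - c_2$, which is negative for small $m$), and a single unramified step collapses $B_N$ to zero, so the paper truncates the tower at a suitable $n_0$; and (ii) the lemma from \cite{preimpaper} has a secondary hypothesis on the degree of the image point controlled by $b_N=\min_m\rho(\phi_m)$, which would be a genuine restriction, so the paper prepends two ramified degree-$2$ covers $\PP^1\to\PP^1$ at the bottom of the truncated tower to force $b_N<1$ and trivialize that condition. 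Without arguments along these lines, the one-shot comparison in your write-up does not close the proof.
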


\begin{remark}
In fact, the proof (which is in Section~\ref{mainproof} below), shows that we may take $N(B)$ to be at most $\log_2 B+O(1)$, and $B\rightarrow\infty$.
\end{remark}


Before proceeding with the outline of the paper, we will make one remark about the requirement that $\Es$ have non-constant $j$-invariant.
While Silverman's result above has been considered in the case of elliptic surfaces $\Es\rightarrow\PP^1$ with constant $j$-invariant, by Gupta and Ramsay \cite{gupta}, it is clear that our main result cannot hold for all split elliptic surfaces.  If $E/k$ is an elliptic curve with rank at least 1, let $\Es\rightarrow E$ be an elliptic surface birational to $E\times E$, with projection onto the second coordinate.  Let $Q\in E(k)$ be a point of infinite order, and let $P$ be the diagonal section $P:E\rightarrow \Es$ defined by $t\mapsto (t, t)$.  Then for any $N$, if we set $t=\ell^NQ$, we clearly have
\[\#\left\{Q'\in E_t(k)=E(k):\ell^nQ'=P_t=\ell^NQ\right\}\geq N.\]

\begin{ack} I would like to thank David McKinnon and Xander Faber, for useful conversations during the writing up of these results, and Joseph Silverman and Soroosh Yazdani, for helpful comments on an earlier draft.
\end{ack}


\section{Notation and outline of the argument}\label{outline}

Although much of the proof of Theorem~\ref{main} will take place in function fields, it is useful to keep in mind the geometric picture.  The strategy of
the proof is as follows: let $\Es\rightarrow C$ be our elliptic surface, and let $\Gamma_0\subseteq \Es$ be the image of our section $P:C\rightarrow\Es$. 
For each $n$, let $\Gamma_{n+1}$ be the pull-back of $\Gamma_n$ by the rational function $[\ell]:\Es\rightarrow\Es$.  For any extension $F/k$, points in $\Gamma_n(F)$ parametrize fibres $\Es_t$ of $\Es$, with $t\in C(F)$, with a marked point $Q\in \Es_t(F)$ such that $\ell^n Q=P_t$.  In general, these curves might be singular and/or reducible, but we will imagine for the moment that their normalizations $\widetilde{\Gamma}_n$ are (geometrically) irreducible.  In other words, we have a tower
\[\widetilde{\Gamma}_0\longleftarrow\widetilde{\Gamma}_1\longleftarrow\widetilde{\Gamma}_2\longleftarrow\widetilde{\Gamma}_3\longleftarrow\cdots\]
of smooth projective curves, connected by dominant morphisms (namely, those induced by multiplication-by-$\ell$), all defined over the number field $k$.  Lemma~\ref{preim} gives us control over points of low algebraic degree on the curves $\widetilde{\Gamma}_n$, provided that the morphisms above ramify enough (equivalently, the genera of the curves increase quickly enough).
 Obtaining the appropriate lower bound on ramification provides for some tricky geometry, since the only possibility for ramification is where the curves $\Gamma_n$ intersect singular fibres of $\Es$, and these are precisely the points at which one might need to blow up in order the resolve the singularities of $\Gamma_n$.  We are saved by moving the entire problem into the function field setting, and applying Tate's non-archimedean uniformization of elliptic curves.  The resulting estimates on ramification, combined with Lemma~\ref{preim}, suffice to prove the results in this special case.

Generally, we can't hope for the curves $\widetilde{\Gamma}_n$ to actually be irreducible (in particular, if $P$ is a multiple by $\ell$ of another section, then $\widetilde{\Gamma}_1$ has a component birational to $C$), but each is the disjoint union of finitely many components, and the rational map $[\ell]:\Es\rightarrow\Es$ induces a map from each component of $\widetilde{\Gamma}_{n+1}$ to some component of  $\widetilde{\Gamma}_n$.  Denoting the components of the normalized curves by $\widetilde{\Gamma}_i^{(j)}$, we have a tree of curves with dominant morphisms which looks something like this:

\vspace{2mm}
\begin{center}
\begin{tikzpicture}
\draw (0,0) node[left] {$\widetilde{\Gamma}_0$};
\draw[<-] (0, 0.1) -- (1, .6);
\draw[<-] (0, -0.1) -- (1, -0.6);

\draw[<-] (2, .6) -- (3, .6);
\draw (2, .6) node[left] {$\widetilde{\Gamma}_1^{(1)}$};

\draw[<-] (2, -0.4) -- (3, -0.1);
\draw[<-] (2, -0.6) -- (3, -0.6);
\draw[<-] (2, -0.8) -- (3, -1.1);
\draw (2, -0.6) node[left] {$\widetilde{\Gamma}_1^{(2)}$};

\draw (4.5, .6) node[left] {$\widetilde{\Gamma}_2^{(1)} \cdots$};
\draw (4.5, -0.1) node[left] {$\widetilde{\Gamma}_2^{(2)} \cdots$};
\draw (4.5, -0.6) node[left] {$\widetilde{\Gamma}_2^{(3)} \cdots$};
\draw (4.5, -1.1) node[left] {$\widetilde{\Gamma}_2^{(4)} \cdots$};

\end{tikzpicture}
\end{center}
The key is to show that this tree  is eventually non-branching.  In other words, we want to show that the tree depicted above contains only finitely many infinite paths, so that we may apply Lemma~\ref{preim}  to each of these paths.
Looking at the surface as an elliptic curve $E$ over $K=k(C)$,  this amounts to showing there is some $N$ such that that the sets $[\ell]^{-n}P\subseteq
E(\overline{K})$ contain at most $N$ Galois orbits, any $n\geq 1$.    For elliptic curves over number fields, this follows from Kummer theory, but it
seems that these results have not previously been extended to elliptic curves over complex function fields.  In Section~\ref{galois}, we prove the
appropriate Galois-theoretic results to show that the number of components of the curves $\widetilde{\Gamma}_n$ eventually stabilizes.  In
Section~\ref{tate}, we will employ Tate's uniformization of elliptic curves over local fields to study the ramification of the maps
$\widetilde{\Gamma}_{n+1}\rightarrow\widetilde{\Gamma}_n$ induced by $[\ell]$.  In both of these sections, we consider $E$ over the extension $K\otimes_k
\CC$ of $K$, in order to obtain geometric results.  In Section~\ref{mainproof}, we assemble the proof of Theorem~\ref{main}, and in Sections~\ref{sharp} and
\ref{chebproof},
the proofs of Theorems~\ref{special_th} and \ref{cheb}.

Throughout the paper, $\Es$ is a smooth elliptic surface (with some chosen `identity section') defined over the number field $k$.  We denote by $E/K$ the generic fibre of $\Es$, an elliptic curve over the function field $K=k(C)$.  In order to obtain geometric results, in Sections~\ref{galois} and \ref{tate}, we will frequently work over the extension $K_\CC=K\otimes_k\CC$, determined by some fixed embedding of $k$ into $\CC$. The curves $\Gamma_n$ are as defined above, and $\widetilde{\Gamma}_n$ are their normalizations.  Since $\widetilde{\Gamma}_0\cong C$, we often identify these curves tacitly.  Also, in a slight abuse of notation, $P$ will stand both for the section $P:C\rightarrow\Es$, as well as the corresponding point in $E(K)$.


\section{Galois orbits}\label{galois}

For any prime $\ell$, the action of the absolute Galois group $\Gal(\overline{K_\CC}/K_\CC)$ on $E(K_\CC)$
partitions \[[\ell]^{-n}P=\{Q\in E(\overline{K_\CC}):\ell^nQ=P\}\] into a certain number of orbits for each $n$, and we wish to show that this number is bounded as $n\rightarrow\infty$.  In other words, we wish to show that Galois acts nearly as freely on $[\ell]^{-n}P$ as the group structure allows.  If $\ell$
is not a special prime, and $P
\not\in \ell E(K)$, we will show that $[\ell]^{-n}P$ is, in fact, Galois-irreducible, for all $n$ (in the sense that all of its elements are conjugate under the action of Galois).

For each $n$ we set $K_n=K_\CC(E[\ell^n])$, the $\ell^n$-division field of $E/K_\CC$, and we set $K_\infty$ to be the union of the $K_n/K_\CC$.  
Let
\[T_\ell(E)=\lim_{\rightarrow}E[\ell^n]\]
be the $\ell$-adic Tate module of $E$.  Fixing a basis for $T_\ell(E)$ allows us to define a representation
$$\rho_\ell:\Gal(K_\infty/K_\CC)\longrightarrow\GL_2(\ZZ_\ell).$$
In the number field case, one knows that such a representation is surjective for all but finitely many primes $\ell$ (unless the elliptic curve has
complex multiplication).  For elliptic curves over $K_\CC$, we cannot expect this to be true.  The Weil pairing $\mu$ on $E[\ell^n]$ sends pairs of torsion
points to roots of unity.  It is not hard to show that \[\mu(\sigma(T_1), \sigma(T_2))=\mu(T_1, T_2)^{\det(\rho_\ell(\sigma))},\] an
 in particular, since $K_\CC$
contains all roots of unity, we must have $\det(\rho_\ell(\sigma))=1$ for all $\sigma\in \Gal(K_\infty/K_\CC)$.
Thus, the image of the representation $\rho_\ell$ must be contained in $\SL_2(\ZZ_\ell)$.

\begin{lemma}\label{open_image}
Let \[\rho_\ell:\Gal(K_\infty/K_\CC)\longrightarrow\SL_2(\ZZ_\ell)\] be the $\ell$-adic Galois representation associated to $E/K_\CC$.  Then the image of
$\rho_\ell$ has finite index in $\SL_2(\ZZ_\ell)$.  Moreover, if $\ell$ is not a special prime for $\Es$, then $\rho_\ell$ is surjective.
\end{lemma}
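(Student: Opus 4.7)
The plan is to realize $\rho_\ell$ as a geometric monodromy representation on a Riemann surface and then invoke classical facts about non-isotrivial elliptic fibrations together with $\ell$-adic Lie theory.

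First I would identify $\rho_\ell$ with monodromy. Let $U$ be the Zariski-open complement in $C\times_k\Spec\CC$ of the bad fibres of $\Es$. By the N\'{e}ron--Ogg--Shafarevich criterion, $\rho_\ell$ is unramified at every point of $U$, hence factors through the \'{e}tale fundamental group of $U$, which the Riemann existence theorem identifies with the profinite completion of $\pi_1(U(\CC))$. Under this identification $\rho_\ell$ is the $\ell$-adic monodromy representation of the local system $R^1 f_{*}\ZZ_\ell$ associated to the smooth elliptic fibration $f:\Es|_U\to U$. By Kodaira's classification of degenerations, the local monodromy at a point $v$ where $j_\Es$ has a pole of order $n_v\geq 1$ is conjugate in $\SL_2(\ZZ)$ to $T^{n_v}$, with $T=\left(\begin{smallmatrix}1&1\\0&1\end{smallmatrix}\right)$; since $j_\Es$ is non-constant on the projective curve $C$, such poles exist and the image therefore contains a nontrivial unipotent element.

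For the first claim I would invoke the classical fact that for a non-isotrivial elliptic fibration the global monodromy has Zariski-dense image in $\SL_2$ -- this follows from the non-triviality of the associated variation of Hodge structure on $R^1 f_*\CC$, equivalently from the non-constancy of the period map $C\to Y(1)$. By the standard theory of $\ell$-adic Lie groups (Lazard, Serre), a closed Zariski-dense subgroup of $\SL_2(\ZZ_\ell)$ has Lie algebra equal to $\mathfrak{sl}_2$ and is therefore open, which yields the finite-index assertion for every $\ell$.

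For the second claim, assume $\ell$ is not special, so that $\ell\geq 3$ and no pole order $n_v$ is divisible by $\ell$. The standard lifting lemma for $\SL_2(\ZZ_\ell)$ (for $\ell\geq 5$, with an extra step at level $9$ when $\ell=3$) reduces surjectivity of $\rho_\ell$ to showing that the image modulo $\ell$ equals $\SL_2(\FF_\ell)$. Each reduction $T^{n_v}\bmod\ell$ is a nontrivial unipotent, so Dickson's classification of subgroups of $\SL_2(\FF_\ell)$ forces the mod-$\ell$ image to be either contained in a Borel subgroup or equal to $\SL_2(\FF_\ell)$. The Borel case is excluded by combining two unipotents coming from distinct poles of $j_\Es$ (whose fixed lines modulo $\ell$ must differ under Zariski-dense monodromy), or, if only a single pole is present, by invoking the extra monodromy arising from the topology of $C$ itself and from finite-order monodromies at potentially-good singular fibres.

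The step I expect to be the main obstacle is the last one: cleanly ruling out a Borel subgroup mod $\ell$ in degenerate configurations, since Zariski density of the characteristic-zero image does not on its own preclude a common $\FF_\ell$-rational fixed line. A careful case analysis organized by the number and Kodaira type of the bad fibres, possibly combined with the explicit shape of the inertia generators, may be required to close this gap.
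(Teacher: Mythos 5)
Your proposal takes a genuinely different route from the paper, which simply cites Igusa's theorem for the finite-index statement and Cox--Parry for the precise surjectivity. You instead rederive the finite-index claim from scratch: identify $\rho_\ell$ with the topological monodromy of the smooth part of the fibration, observe that a pole of $j_\Es$ furnishes a nontrivial unipotent, argue Zariski density in $\SL_2$ from non-isotriviality (equivalently non-constancy of the period map), and then apply Lazard--Serre to conclude openness of the $\ell$-adic image. This is correct and self-contained where the paper is a citation, which is a legitimate trade-off.

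For the second claim, however, the gap you flag is genuine and is not a minor technicality. Knowing that the $\ell$-adic image is open in $\SL_2(\ZZ_\ell)$ and that it contains a nontrivial unipotent mod $\ell$ does \emph{not} force the mod-$\ell$ reduction to be all of $\SL_2(\FF_\ell)$: for example, the subgroup of $\SL_2(\ZZ)$ generated by $\left(\begin{smallmatrix}1&1\\0&1\end{smallmatrix}\right)$ and $\left(\begin{smallmatrix}1&0\\\ell&1\end{smallmatrix}\right)$ is Zariski dense, has $\ell$-adically open closure, contains a unipotent of order prime to $\ell$, and yet reduces mod $\ell$ into a Borel. So ``Zariski density plus one parabolic'' is not enough, and your fallback (``fixed lines must differ'' or ``extra monodromy from $C$ and finite-order fibres'') is exactly the part that needs an actual argument. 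Closing this requires showing that the global monodromy group, viewed inside $\SL_2(\ZZ)$, surjects onto $\SL_2(\ZZ/N\ZZ)$ precisely when $N$ is coprime to twice the lcm of the pole orders of $j_\Es$; that is the content of the Cox--Parry theorem the paper invokes, and it rests on an analysis of congruence subgroups and modular curves rather than on Dickson plus casework. Your first claim is sound; for the second you will either need to import Cox--Parry (as the paper does) or reproduce the substance of their argument.
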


\begin{proof}
The first claim is a theorem of Igusa \cite{igusa}.  More precisely, Cox and Parry \cite{cox-parry} show that the image of Galois in \[\SL_2(\widehat{\ZZ})=\prod_{\ell}\SL_2(\ZZ_\ell)\] contains the congruence
subgroup \[\Gamma(N)=\left\{M\in\SL_2(\ZZ_\ell):M\equiv\begin{pmatrix}1 & 0\\0 & 1\end{pmatrix}\MOD{N}\right\},\]
 for $N$  twice the least common multiple of the orders of the poles of the $j$-invariant $j_\Es:C\rightarrow \PP^1$.
  In other words, if $\ell$ is not a special prime for $\Es$ (i.e., if $
  \ell$ is odd, and prime to the orders of the poles of $j_\Es$), then the image of Galois is all of $\SL_2(\ZZ_\ell)$.
\end{proof}

We will need some facts about the Galois cohomology of elliptic curves over complex function fields.  In the number field case, the theory is reasonably well-understood, due to work of Bashmakov \cite{bashmakov} (see also Ribet \cite{ribet}).  The techniques rely, however, on the fact that in the number field setting, Galois acts on the $\ell$-primary torsion as an open subgroup of $\GL_2(\ZZ_\ell)$.  In particular, Bashmakov exploits elements of the centre of $\GL_2$, while the centre of $\SL_2$ is decidedly less interesting.  Nonetheless, the result we need is still true in this setting.

\begin{lemma}\label{cohom}
The first cohomology group  $H^1(\Gal(K_\infty/K_\CC), T_\ell(E))$ has finite exponent.  Furthermore, if $\ell$ is an odd prime, and if the representation $\rho_\ell$ is surjective, then the group  is trivial.
\end{lemma}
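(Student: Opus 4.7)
The plan is to handle the cleaner surjective case first via a central-element trick, and then to reduce the finite-exponent claim through the Tate-module exact sequence to a Lie-theoretic vanishing statement.

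\textbf{The surjective case.} When $\rho_\ell$ is surjective, $G = \SL_2(\ZZ_\ell)$ contains the central involution $\sigma = -I$, which acts on $T_\ell(E)$ as multiplication by $-1$. For any continuous cocycle $f\colon G \to T_\ell(E)$, centrality of $\sigma$ yields
\[
\sigma f(g) + f(\sigma) \;=\; f(\sigma g) \;=\; f(g\sigma) \;=\; g f(\sigma) + f(g),
\]
which rearranges to $-2 f(g) = (g-1) f(\sigma)$. Since $\ell$ is odd, $2$ is a unit in $\ZZ_\ell$, so $f$ is the coboundary of $-\tfrac{1}{2}f(\sigma)\in T_\ell(E)$. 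Hence every cocycle is a coboundary, giving $H^1(G, T_\ell(E)) = 0$.

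\textbf{Reducing the finite-exponent claim.} For the general case, I would pass to the short exact sequence of $G$-modules
\[
0 \to T_\ell(E) \to V_\ell(E) \to E[\ell^\infty] \to 0,
\]
with $V_\ell(E) = T_\ell(E)\otimes_{\ZZ_\ell}\QQ_\ell$, and read off the long exact sequence fragment
\[
(E[\ell^\infty])^G \to H^1(G, T_\ell(E)) \to H^1(G, V_\ell(E)).
\]
Because $E/K_\CC$ has non-constant $j$-invariant, the Lang--N\'eron theorem over function fields gives that $E(K_\CC)_{\mathrm{tors}}$ is finite, so the $\ell$-primary part $(E[\ell^\infty])^G = E(K_\CC)[\ell^\infty]$ is finite. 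Since $H^1(G, V_\ell(E))$ is a $\QQ_\ell$-vector space (hence torsion-free), any $\ZZ_\ell$-submodule of finite exponent in it must vanish, so the finite-exponent claim reduces to showing $H^1(G, V_\ell(E)) = 0$.

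\textbf{Vanishing of $H^1(G, V_\ell(E))$.} By Lemma~\ref{open_image}, the image $\Gamma$ of $\rho_\ell$ contains the principal congruence subgroup $\Gamma(\ell^N)$ for some $N$ (chosen $\geq 2$ when $\ell=2$), and $\Gamma(\ell^N)$ is a normal, finite-index subgroup of $\Gamma$ that is itself a uniform pro-$\ell$ analytic group. Since $V_\ell^{\Gamma(\ell^N)} = 0$ (a common fixed vector of the $I + \ell^N M$ would have to be killed by every $M\in\mathfrak{sl}_2(\ZZ_\ell)$), inflation-restriction embeds $H^1(\Gamma, V_\ell)$ into $H^1(\Gamma(\ell^N), V_\ell)^{\Gamma/\Gamma(\ell^N)}$, reducing the problem to the vanishing of $H^1(\Gamma(\ell^N), V_\ell)$. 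By Lazard's theorem on continuous cohomology of compact $\ell$-adic Lie groups, this agrees with the Lie-algebra cohomology $H^1(\mathfrak{sl}_2(\QQ_\ell), V_\ell)$, which vanishes by Whitehead's first lemma applied to the semisimple Lie algebra $\mathfrak{sl}_2(\QQ_\ell)$ acting on the finite-dimensional $\QQ_\ell$-representation $V_\ell$.

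The hardest step, I expect, is the appeal to Lazard's theorem: one must verify that $V_\ell$ really is an analytic representation of $\Gamma(\ell^N)$ and that continuous group cohomology agrees with Lie-algebra cohomology in this setting. Everything else is routine manipulation of the long exact sequence and inflation-restriction, together with the central-element trick in the surjective case.
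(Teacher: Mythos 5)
Your proof is correct but takes a genuinely different route in the general case. For the surjective case you use the same Sah-type central-element trick as the paper; the only difference is that the paper notes $H^1(G, T_\ell(E))$ is $\ell$-torsion (as a limit of $H^1(G_n, E[\ell^n])$) and hence killed by $\gcd(2,\ell)$, whereas you just divide by $2$ directly since $\ell$ is odd — same idea. For the finite-exponent claim the paper avoids all Lie theory: it picks the subgroup $H\subseteq\Gamma(\ell^N)$ generated by elementary unipotents and does a hands-on cocycle computation to show $\ell^{2N}$ annihilates $H^1(H, T_\ell(E))$, then transfers to $G$ by restriction-corestriction. You instead pass to $0\to T_\ell\to V_\ell\to E[\ell^\infty]\to 0$, observe $(E[\ell^\infty])^G$ is finite, and reduce to $H^1(G, V_\ell)=0$, which you get from inflation-restriction plus Lazard's comparison with Lie-algebra cohomology and Whitehead's first lemma. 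This is a legitimate alternative — it even yields finiteness of $H^1(G, T_\ell(E))$, slightly more than finite exponent — at the cost of importing a deep input (Lazard's theorem) and some care about the long exact sequence in continuous cochain cohomology (which is fine here, since $E[\ell^\infty]$ is discrete and therefore the quotient $V_\ell\to E[\ell^\infty]$ admits a continuous set-theoretic section). One small expository slip: you say the finite-exponent claim ``reduces to'' $H^1(G, V_\ell)=0$, but the correct logic is that this vanishing is sufficient, since it forces $H^1(G,T_\ell)$ to be a quotient of the finite group $(E[\ell^\infty])^G$; it is not literally necessary. The paper's argument has the virtue of being elementary and self-contained; yours is more conceptual and puts the result in the standard framework of $p$-adic representation theory.
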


\begin{proof}
In the case where $\rho_\ell$ is surjective, the traditional proof works: the group $G=\Gal(K_\infty/K_\CC)\cong\SL_2(\ZZ_\ell)$ contains an element that acts as $-1$ on $T_\ell(E)$.    Since this element is in the centre of $G$, we know (by a lemma of Sah) that multiplication by $-2$ annihilates the first cohomology group.  Since the group $H^1(G, T_\ell(E))$ is $\ell$-torsion (multiplication by $\ell^m$ kills $E[\ell^m]$, and $H^1(G, T_\ell(E))$ is the projective limit of $H^1(\Gal(K_n/K_\CC), E[\ell^n])$),  it has exponent $\gcd(2, \ell)$.

We now treat the more general case, modifying an argument of Tate presented by Coates \cite{coates}.  For simplicity, choose a basis for $T_\ell(E)$, and identify $G$ with its image in $\SL_2(\ZZ_\ell)$.  By Lemma~\ref{open_image}, the image of the map
\[\Gal(K_\infty/K_\CC)\longrightarrow\SL_2(\ZZ_\ell)\] 
contains the congruence subgroup  $\Gamma(\ell^N)$, the kernel of reduction modulo $\ell^N$, for some $N\geq 0$.   Now, let $H \subseteq \Gamma(\ell^N)\subseteq G$ be the subgroup generated by the set of matrices
$$\left\{\begin{pmatrix}1 & 0\\\beta & 1\end{pmatrix}, \begin{pmatrix}1 & \beta \\ 0 & 1\end{pmatrix} :\beta\in \ell^N\ZZ_\ell\right\}.$$
We will show that $H^1(H, T_\ell(E))$ has finite exponent.  As noted at the beginning of the proof of Lemma~3.10 of \cite{bandini}, $H$ contains $\Gamma(\ell^{2N})$, and so 
\[(G:H)\leq (G:\Gamma(\ell^{2N}))\leq (\SL_2(\ZZ_\ell):\Gamma(\ell^{2N}))=\ell^{1+6N}(\ell^2-1).\] 
  Since the composition of the restriction and corestriction maps 
\[H^1(G, T_\ell(E))\stackrel{\mathrm{Res}}{\longrightarrow} H^1(H, T_\ell(E))\stackrel{\mathrm{Cor}}{\longrightarrow} H^1(G, T_\ell(E))\]
is simply multiplication by $(G:H)$, proving that $H^1(H, T_\ell(E))$ has finite exponent would be enough to prove the same of $H^1(G, T_\ell(E))$.

Let $f:H\rightarrow T_\ell(E)$ be a continuous 1-cocyle, that is, a map satisfying
$$f(\tau\sigma)=f(\tau)+\tau f(\sigma)$$
for all $\tau, \sigma\in H$.
For ease of reference, we will write
$$f\begin{pmatrix} 1 & \beta \\ 0 & 1\end{pmatrix}=\begin{pmatrix}f_1(\beta)\\f_2(\beta)\end{pmatrix}.$$
Note that, by the cocycle relation, 
\begin{eqnarray}
\begin{pmatrix}f_1(\beta_1+\beta_2)\\ f_2(\beta_1+\beta_2)\end{pmatrix}&=&f\begin{pmatrix}1 & \beta_1+\beta_2 \\ 0 & 1\end{pmatrix}\nonumber\\
&=&f\left(\begin{pmatrix} 1 & \beta_1 \\ 0 & 1\end{pmatrix}\begin{pmatrix} 1 & \beta_2\\ 0 & 1\end{pmatrix}\right)\nonumber\\
&=&\begin{pmatrix} f_1(\beta_1)\\f_2(\beta_1)\end{pmatrix}+\begin{pmatrix} 1 & \beta_1 \\ 0 & 1\end{pmatrix}\begin{pmatrix}f_1(\beta_2) \\ f_2(\beta_2)\end{pmatrix}\nonumber\\
&=&\begin{pmatrix} f_1(\beta_1)+f_2(\beta_2)+\beta_1f_2(\beta_2)\\f_2(\beta_1)+f_2(\beta_2)\end{pmatrix},\label{homomorphism}
\end{eqnarray}
and so, in particular, \[f_2(\beta_1+\beta_2)=f_2(\beta_1)+f_2(\beta_2)\] for all $\beta_1, \beta_2\in \ell^N\ZZ_\ell$.  Since cocycles send the identity
to the identity, $f_2$ is actually a homomorphism from $\ell^N\ZZ_\ell$ to $\ZZ_\ell$.

   Now, for any $\alpha\in 1+\ell^{2N}\ZZ_\ell$, we have $\begin{pmatrix} \alpha & 0 \\ 0 & \alpha^{-1}\end{pmatrix}\in H$.  This follows from the aforementioned comment in \cite{bandini}, or more directly from observing that for any $\gamma\in\ZZ_\ell$,
\[\begin{pmatrix} 1 & 0 \\ \frac{-\ell^N}{1+\ell^N\gamma} & 1\end{pmatrix}
\begin{pmatrix}1 & \ell^N\gamma \\ 0 & 1\end{pmatrix}
\begin{pmatrix}1 & 0 \\ \ell^N & 1\end{pmatrix}
\begin{pmatrix} 1 & \frac{-\ell^N\gamma}{1+\ell^N\gamma} \\ 0 & 1\end{pmatrix} =
\begin{pmatrix} 1+\ell^{2N}\gamma & 0 \\ 0 & (1+\ell^{2N}\gamma)^{-1}\end{pmatrix}.\]
  At this point, to simplify notation, we will write
\[\sigma_{\alpha, \beta}=\begin{pmatrix} \alpha & \beta\\ 0 & \alpha^{-1}\end{pmatrix}.\]
Now, on the one hand, we have the relation
\[\sigma_{\alpha, 0}\sigma_{1, \beta}\sigma_{\alpha, 0}^{-1}=\sigma_{1, \alpha^2\beta},\]
by simply multiplying the matrices.  On the other hand, it follows from the cocycle relation that 
$f(1)=f(1)+f(1)$, and so $f(1)=0$.  Furthermore,
\[0=f(1)=f(\sigma^{-1}\sigma)=f(\sigma^{-1})+\sigma^{-1}f(\sigma),\]
and so
$ f(\sigma^{-1})=-\sigma^{-1}f(\sigma)$ for all $\sigma$.  Thus, if $\alpha\in \ZZ\cap(1+\ell^{2N}\ZZ_\ell)$, (suppressing the first coordinate for convenience)
\begin{eqnarray*}
\begin{pmatrix}* \\ \alpha^2f_2(\beta) \end{pmatrix} =\begin{pmatrix}*\\ f_2(\alpha^2\beta)\end{pmatrix}& = & f(\sigma_{1, \alpha^2\beta})=f(\sigma_{\alpha, 0}\sigma_{1, \beta}\sigma_{\alpha, 0}^{-1})\\
&=& f(\sigma_{\alpha, 0})+\sigma_{\alpha, 0}f(\sigma_{1, \beta})+\sigma_{\alpha, 0}\sigma_{1, \beta}f(\sigma_{\alpha, 0}^{-1})\\
&=&f(\sigma_{\alpha, 0})+\sigma_{\alpha, 0}f(\sigma_{1, \beta})-\sigma_{\alpha, 0}\sigma_{1, \beta}\sigma_{\alpha, 0}^{-1}f(\sigma_{\alpha, 0})\\
&=&\begin{pmatrix}0 & -\alpha^2\beta\\ 0 & 0\end{pmatrix} f(\sigma_{\alpha, 0})+\sigma_{\alpha, 0}f(\sigma_{1, \beta})\\
&=&\begin{pmatrix} * \\ \alpha^{-1}f_2(\beta)\end{pmatrix}.
\end{eqnarray*}
Thus, for a given  $\beta$, we have $\alpha^2 f_2(\beta)=\alpha^{-1}f_2(\beta)$ for any integer $\alpha\equiv 1\MOD{\ell^{2N}}$.  Clearly, then, $f_2(\beta)=0$ for all $\beta$.

We now have $f_2$ vanishing identically, and, from \eqref{homomorphism} above, $f_1$ must be a homomorphism.  Again, we have $f_1(\alpha\beta)=\alpha f_1(\beta)$ for all $\alpha\in \ZZ\cap(1+\ell^{2N}\ZZ_\ell)$ and $\beta\in\ell^N\ZZ_\ell$.  Write
\[f\begin{pmatrix}\alpha & 0\\ 0 & \alpha^{-1}\end{pmatrix}=\begin{pmatrix} g_1(\alpha)\\ g_2(\alpha)\end{pmatrix},\]
for all $\alpha\in1+\ell^{2N}\ZZ_\ell$.   We compute again
\begin{eqnarray*}
\begin{pmatrix} \alpha^2f_1(\beta)\\ 0\end{pmatrix}&=& f(\sigma_{1, \alpha^2\beta})=f(\sigma_{\alpha, 0}\sigma_{1, \beta}\sigma_{\alpha, 0}^{-1})\\ & \vdots & \\
&=&\begin{pmatrix}0 & -\alpha^2\beta\\ 0 & 0\end{pmatrix}f(\sigma_{\alpha, 0})+\sigma_{\alpha, 0}f(\sigma_{1, \beta})\\
&=&\begin{pmatrix}-\alpha^2\beta g_2(\alpha)\\0\end{pmatrix}+\begin{pmatrix}\alpha f_1(\beta)\\0\end{pmatrix}.
\end{eqnarray*}
Thus, taking $\alpha=1+\ell^{2N}$, we obtain
\[\left(\alpha^2-\alpha\right)f_1(\beta)=-\alpha^2\beta g_2(\alpha),\]
and therefore 
\[\ell^{2N}f\begin{pmatrix}1 & \beta \\ 0 & 1\end{pmatrix}=\begin{pmatrix}-\alpha g_2(\alpha)\beta\\0\end{pmatrix},\]
for all $\beta$.  Using essentially the same argument, we can also show that
\[\ell^{2N}f\begin{pmatrix} 1 & 0 \\ \beta & 1\end{pmatrix}=\begin{pmatrix} 0\\ g_1(\alpha)\beta\end{pmatrix}.\]
Thus, for
\[\sigma\in\left\{\begin{pmatrix} 1 & \beta \\ 0 & 1\end{pmatrix}, \begin{pmatrix} 1 & 0 \\ \beta & 1\end{pmatrix}:\beta\in \ell^N\ZZ_\ell\right\},\]
we obtain
\[\ell^{2N}f(\sigma)=\sigma (\xi)-\xi\quad\text{for}\quad\xi=\begin{pmatrix}g_1(\alpha)\\ -\alpha g_2(\alpha)\end{pmatrix}.\]
The same relation must hold on all of $H$, since matrices of this form generate $H$, and so $\ell^{2N}f$ is a 1-coboundary.  Since $f$ was arbitrary,
$\ell^{2N}H^1(H, T_\ell(E))$ is trivial, proving the result.
\end{proof}

\begin{lemma}
For sufficiently large $m$, if  $Q\in E(\overline{K_\CC})$ and $\ell^m Q=P$, then $Q\not\in E(K_\infty)$.  If $\ell$ is not a special prime, and $P\not\in \ell E(K_\CC)$, then $\ell Q=P$ implies $Q\not\in E(K_\infty)$.
\end{lemma}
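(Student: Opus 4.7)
The plan is a Kummer-style cocycle argument: from $Q\in E(K_\infty)$ with $\ell^m Q=P$ we extract the cocycle $\sigma\mapsto \sigma(Q)-Q$ and feed it into Lemma~\ref{cohom}. I will take the sharper second assertion first, because it displays the pattern cleanly.

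Suppose $\ell Q=P$ with $Q\in E(K_\infty)$. Then $\xi:\sigma\mapsto \sigma(Q)-Q$ is a continuous $1$-cocycle from $G=\Gal(K_\infty/K_\CC)$ to $E[\ell]$, since $\ell\xi(\sigma)=\sigma(P)-P=0$. Because $\ell$ is not special, Lemma~\ref{open_image} makes $\rho_\ell$ surjective, so $-I\in G$; being central and acting as $-1$ on $E[\ell]$, the Sah argument from the proof of Lemma~\ref{cohom} forces $-2$ to annihilate $H^1(G,E[\ell])$, and with $\ell>2$ this gives $H^1(G,E[\ell])=0$. Hence $\xi(\sigma)=\sigma(R)-R$ for some $R\in E[\ell]$, so $Q-R\in E(K_\CC)$ with $\ell(Q-R)=P$, contradicting $P\notin\ell E(K_\CC)$.

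For the first assertion I run the same argument at every level simultaneously. Suppose, toward a contradiction, that for every $m\geq 1$ there is some $Q_m\in E(K_\infty)$ with $\ell^m Q_m=P$. The preimages at level $m$ form a nonempty finite torsor under $E[\ell^m]\subseteq E(K_\infty)$, so a K\"{o}nig's lemma argument on the finitely-branching tree whose edges send $Q_{m+1}$ to $\ell Q_{m+1}$ lets me arrange $\ell Q_{m+1}=Q_m$ for every $m$. The cocycles $\xi_m(\sigma)=\sigma(Q_m)-Q_m\in E[\ell^m]$ then satisfy $\ell\xi_{m+1}=\xi_m$, and assemble into a continuous $1$-cocycle $\tilde\xi:G\to T_\ell(E)$, $\tilde\xi(\sigma)=\varprojlim_m \xi_m(\sigma)$. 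By Lemma~\ref{cohom} the class $[\tilde\xi]\in H^1(G,T_\ell(E))$ is annihilated by some absolute $\ell^e$, so $\ell^e\tilde\xi(\sigma)=\sigma(R)-R$ for some $R\in T_\ell(E)$. Projecting to level $m$ produces $R_m\in E[\ell^m]$ with $\ell^e Q_m-R_m\in E(K_\infty)^G=E(K_\CC)$, and applying $[\ell^m]$ then gives $\ell^e P\in\ell^m E(K_\CC)$ for every $m$.

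Since $j_\Es$ is non-constant, Lang--N\'{e}ron makes $E(K_\CC)$ finitely generated, and the only element of such a group that is divisible by every power of $\ell$ is $0$. Hence $\ell^e P=0$, so $P$ is torsion of $\ell$-power order; under the tacit assumption that $P$ is not of this form (which is the only case in which the assertion is meaningful, since for $\ell$-power torsion $P$ any $Q\in\ell^{-m}P$ is itself $\ell$-power torsion and thus lies in $K_\infty$), we have our contradiction. The principal difficulty I anticipate is the compatible-choice step and verifying that $\tilde\xi$ is a well-defined continuous $T_\ell(E)$-valued cocycle; once that is done, Lemma~\ref{cohom} and finite generation of $E(K_\CC)$ deliver the result formally.
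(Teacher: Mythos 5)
Your proof of the second assertion matches the paper's almost word for word. For the first assertion you take the same underlying Kummer/cohomological route, but with a more roundabout execution: where the paper simply fixes a single large $m$ and one preimage $Q \in E(K_n)$, extracts a cocycle $G_n \to E[\ell^m]$, and kills it with an $\ell^s$ independent of $n$ and $m$ (drawn from Lemma~\ref{cohom}), you first use K\"{o}nig's lemma to build a compatible tower $\ell Q_{m+1}=Q_m$ in $E(K_\infty)$, assemble these into a single $T_\ell(E)$-valued cocycle, and only then apply Lemma~\ref{cohom}. The detour is not wrong, and it has the mild virtue of invoking Lemma~\ref{cohom} exactly as stated (coefficients in $T_\ell(E)$) rather than the derived statement about $H^1(G_n,E[\ell^m])$ the paper quietly uses; you are also right to flag the implicit non-torsion hypothesis on $P$, which the paper glosses over.

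However, there is a small but genuine error in your final step. You assert that in a finitely generated abelian group the only element divisible by every power of $\ell$ is $0$, hence $\ell^e P=0$. That is false: in $E(K_\CC)\cong T\oplus\ZZ^r$ the intersection $\bigcap_m \ell^m E(K_\CC)$ is the prime-to-$\ell$ torsion subgroup, not $\{0\}$ (for instance, every element of $\ZZ/3\ZZ$ is divisible by every power of $2$). So what your argument actually yields is that $\ell^e P$ lies in the prime-to-$\ell$ torsion, i.e.\ that $P$ is torsion with $\ell$-part of order at most $\ell^e$ --- not that $P$ is $\ell$-power torsion. Correspondingly, the ``tacit assumption'' under which the lemma is meaningful is that $P$ is non-torsion (not merely non--$\ell$-power-torsion): if $P$ has order prime to $\ell$, the preimages $[\ell]^{-m}P$ all live in $\langle P\rangle + E[\ell^m]\subseteq E(K_\infty)$, so the conclusion of the lemma fails. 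With this correction your argument closes; it is the same argument the paper gives, just reached after a longer walk.
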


\begin{proof}
Suppose that $Q\in K_n$, and consider the map
\[f:G_n=\Gal(K_n/K_\CC)\rightarrow E[\ell^m]\]
defined by $f(\sigma)=\sigma(Q)-Q$.  Our first observation is that $f$ is a 1-cocyle.  This is simply because
\begin{eqnarray*}
f(\tau\sigma)&=&\tau\sigma(Q)-Q\\
&=&\tau\sigma(Q)-\tau(Q)+\tau(Q)-Q\\
&=&\tau f(\sigma)+f(\tau).
\end{eqnarray*}
By Lemma~\ref{cohom}, there is some $s$, not depending on $n$ and $m$, such that $\ell^sH^1(G_n, E[\ell^m])$ is trivial.  It follows that $\ell^sf$ is a 1-coboundary, so that $\ell^sf(\sigma)=\sigma(\xi)-\xi$, for some fixed $\xi\in E[\ell^m]$ and all $\sigma\in G_n$.  But then
\begin{eqnarray*}
\sigma(\ell^s Q-\xi)&=&\ell^s\sigma(Q)-\ell^sQ+\ell^sQ-\sigma(\xi)+\xi-\xi\\
&=&\ell^sf(\sigma)+\ell^sQ-\ell^sf(\sigma)-\xi=\ell^sQ-\xi,
\end{eqnarray*}
for all $\sigma\in G_n$, and so $\ell^sQ-\xi\in E(K_\CC)$. This implies \[\ell^sP=\ell^m(\ell^sQ-\xi)\in \ell^m E(K_\CC),\] which cannot be true if $m$
is large enough, since $s$ is  independent of $m$, and $E(K_\CC)$ is finitely generated.

If $\ell$ is not a special prime, let $\ell Q=P$ for some $Q\in E(K_n)$. The triviality of $H^1(G_n, E[\ell])$, by the argument above with $s=0$, leads to
$Q+\xi\in E(K_\CC)$, for some $\xi\in E[\ell]$.  This implies $P\in\ell E(K_\CC)$, which we have assumed is not true.
\end{proof}

Now, choose a consistent family of preimages of $P$, that is, a sequence $Q_s\in E(\overline{K_\CC})$ such that $Q_0=P$, and $\ell Q_{s+1}=Q_s$.  We consider the maps
\[f_s:\Gal(K_\infty(Q_s)/K_\infty)\longrightarrow E[\ell^s]\]
by
\[f_s(\sigma)=\sigma(Q_s)-Q_s.\]
Then $f_s$ is a homomorphism, since it is a 1-cocyle, and the domain acts trivially on the image.  It is also clear that $f_s$ is injective, since any $\sigma\in\Gal(K_\infty(Q_s)/K_\infty)$ fixing $Q_s$ must fix everything.  What's more, the diagram
\[\begin{CD}
\Gal(K_\infty(Q_{s+1})/K_\infty) @>\mathrm{restriction}>> \Gal(K_\infty(Q_s)/K_\infty)\\
@Vf_{s+1}VV @VVf_sV\\
\im(f_{s+1}) @>>[\ell]> \im(f_s)\\
\end{CD}\]
commutes (since $[\ell]$ is defined over the ground field), and so we may take (compatible) projective limits of the top and bottom.  If we let $K'$ denote the union of all fields $K_\infty(Q_s)$, we thereby obtain a map $f_\infty$ from $\Gal(K'/K_\infty)$ to the Tate module $T_\ell(E)$.
The group
\[H_\infty=\lim_{\rightarrow}\im(f_s)\subseteq \lim_{\rightarrow}E[\ell^s]=T_\ell(E)\]
is a submodule of $T_\ell(E)$ (both in terms of the Galois module structure, and the $\ZZ_\ell$ module structure).  To sum up, projective limits give us the following diagram, with exact rows
\[\begin{CD}
0 @>>>  \Gal(K'/K_\infty) @>>> \Gal(K'/K_\CC) @>>> \Gal(K_\infty/K_\CC) @>>> 0\\
& & @VVf_\infty V@VVV @VV\rho_\ell V & \\
0 @>>>  T_\ell(E) @>>> T_\ell(E)\rtimes\SL_2(\ZZ_\ell) @>>> \SL_2(\ZZ_\ell) @>>> 0.\\
\end{CD}\]
The next lemma shows that $\im(f)_\infty$ has finite index in $T_\ell(E)$.  Note that this is the same as showing that the image of $\Gal(K'/K_\CC)$ has finite index in $T_\ell(E)\rtimes\SL_2(\ZZ_\ell)$, in light of Lemma~\ref{open_image} and the exactness of the sequences above.

\begin{lemma}\label{hlemma}
The group $H_\infty$ has finite index in $T_\ell(E)$.  If $\ell$ is not a special prime, then in fact $H_\infty=T_\ell(E)$.
\end{lemma}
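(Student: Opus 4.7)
The plan is to exploit three ingredients in sequence: the non-triviality of $f_s$ for large $s$ (from the preceding lemma), the openness of the image of $\rho_\ell$ (Lemma~\ref{open_image}), and the irreducibility of the natural representation of $\SL_2$. The first step establishes non-triviality of $H_\infty$; the second promotes this to finite index; and for the stronger conclusion when $\ell$ is not special, the third pins down $H_\infty = T_\ell(E)$ via Nakayama's lemma.

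I begin by noting that the transition maps $[\ell]\colon\im(f_{s+1})\to\im(f_s)$ are surjective. The cocycle identity gives $[\ell]f_{s+1}(\sigma)=\sigma(\ell Q_{s+1})-\ell Q_{s+1}=f_s(\sigma|_{K_\infty(Q_s)})$, and the restriction $\Gal(K_\infty(Q_{s+1})/K_\infty)\to \Gal(K_\infty(Q_s)/K_\infty)$ is onto, yielding surjection on images. The preceding lemma says $Q_m \notin E(K_\infty)$ for some $m$, so $f_m$ is nontrivial on its (injective) domain and $\im(f_m)\neq 0$; surjectivity of the transitions then forces $\im(f_s) \neq 0$ for all $s \geq m$. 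Hence $H_\infty$ contains a nonzero element (extend a nonzero $\alpha_m$ upward by lifting through the surjective transitions, downward by applying $[\ell]$), and since $T_\ell(E)$ is torsion free, $H_\infty \otimes_{\ZZ_\ell} \QQ_\ell$ is a nonzero subspace of $V_\ell(E) \cong \QQ_\ell^2$.

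For the finite index claim, I would first check that $H_\infty$ is Galois stable: a lift $\tilde\tau$ of $\tau \in \Gal(K_\infty/K_\CC)$ intertwines $f_s$ with the cocycle associated to $\tilde\tau\sigma\tilde\tau^{-1}$, using that $K_\infty$ contains all $\ell$-power torsion to absorb the discrepancy $\tilde\tau(Q_s)-Q_s\in E[\ell^s]$. Then $H_\infty \otimes \QQ_\ell$ is a nonzero subspace of $V_\ell(E)$ stable under the image $G = \rho_\ell(\Gal(K_\infty/K_\CC))$. By Lemma~\ref{open_image}, $G$ contains a principal congruence subgroup $\Gamma(\ell^N)$, which is $\ell$-adically open and hence Zariski dense in $\SL_2$ (its $\ell$-adic Lie algebra is all of $\mathfrak{sl}_2$). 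Since the defining representation of $\SL_2$ on $\QQ_\ell^2$ is irreducible, any nonzero $G$-stable subspace must be all of $V_\ell(E)$, and so $H_\infty$ has finite index in $T_\ell(E)$.

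For the second claim, I would assume $\ell$ is not special and (as is implicit in the preceding lemma's hypothesis) $P\notin \ell E(K_\CC)$. That lemma gives $Q_1\notin E(K_\infty)$, so $\im(f_1)\neq 0$; because $\rho_\ell$ now surjects onto $\SL_2(\ZZ_\ell)$, reduction modulo $\ell$ yields a surjection onto $\SL_2(\FF_\ell)$, which acts irreducibly on $E[\ell]\cong\FF_\ell^2$, forcing $\im(f_1)=E[\ell]$. The projection $T_\ell(E)\twoheadrightarrow E[\ell]$ carries $H_\infty$ onto $\im(f_1)=E[\ell]$ (by surjectivity of the transitions), so $H_\infty+\ell T_\ell(E)=T_\ell(E)$, and Nakayama's lemma over the local ring $\ZZ_\ell$ yields $H_\infty=T_\ell(E)$. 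The main obstacle I anticipate is the Galois-theoretic irreducibility input, which would fail for a merely dense but non-open subgroup; once the Igusa-Cox-Parry description of the image of $\rho_\ell$ is in hand, the remaining work is largely bookkeeping with the inverse systems and a routine application of Nakayama.
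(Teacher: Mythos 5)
Your proof is correct and rests on the same two pillars as the paper's---the openness of the Galois image (Lemma~\ref{open_image}) and Nakayama's lemma---but the finite-index step is executed differently. The paper argues concretely: $H_\infty$ is a Galois submodule, so if it were cyclic the image of $\rho_\ell$ would lie in a Borel, contradicting openness; then it picks two $\ZZ_\ell$-independent vectors in $H_\infty$ and does a direct matrix computation to get $\ell^s T_\ell(E)\subseteq H_\infty$. You instead tensor with $\QQ_\ell$ and invoke irreducibility of $V_\ell(E)$ under any open (hence Zariski dense) subgroup of $\SL_2(\ZZ_\ell)$. These are equivalent in substance, and your route is, if anything, cleaner; the paper's is more explicit about the resulting bound on the index. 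Two things you do that the paper leaves implicit are worth noting. First, you actually verify that $H_\infty\neq 0$ (via the preceding lemma and surjectivity of the transition maps $[\ell]:\im(f_{s+1})\to\im(f_s)$); the paper's ``it is clear that $H_\infty$ is not cyclic'' tacitly presupposes this, since a trivial $H_\infty$ places no constraint on the Galois image. Second, for the non-special case the paper simply posits ``suppose $f_1$ is surjective'' and never justifies this; you derive $\im(f_1)=E[\ell]$ from the preceding lemma together with the irreducibility of $E[\ell]$ as an $\SL_2(\FF_\ell)$-module, and you correctly observe that this requires the additional hypothesis $P\notin\ell E(K_\CC)$, which the lemma statement itself omits but which is in force in all the paper's applications. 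Your treatment of the Galois-stability of $H_\infty$ via conjugation, absorbing $\tilde\tau(Q_s)-Q_s\in E[\ell^s]$, is also a sound justification of a fact the paper asserts without proof just before the lemma.
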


\begin{proof}
This simply follows from the fact that $H_\infty$ is a submodule of $T_\ell(E)$, and that the image of the action of Galois on $T_\ell(E)$ is an open subgroup of $\SL_2(\ZZ_\ell)$.  First suppose that $f_1$ is surjective, so that $\im(f_1)=E[\ell]$.  Then $H_\infty$ is a submodule of $T_\ell(E)$, with the property that $T_\ell(E)=H_\infty+\ell T_\ell(E)$.  It follows from Nakayama's Lemma (Lemma~4.2 on page 425 of \cite{lang}) that $H_\infty=T_\ell(E)$.

We now treat the general case.  First of all, it is clear that $H_\infty$ is not cyclic.  If $H_\infty$ were cyclic, then, as $H_\infty$ is a Galois submodule of $T_\ell(E)$, the image of the representation
\[\rho_\ell:\Gal(K_\infty/K_\CC)\longrightarrow \SL_2(\ZZ_\ell)\]
would be contained in a Borel subgroup, which clearly violates Lemma~\ref{open_image}.  So we may choose two linearly independent elements in $H_\infty$.  Let
\[\begin{pmatrix}u_1\ell^{a_1}\\u_2\ell^{a_2}\end{pmatrix}\qquad\text{and}\qquad\begin{pmatrix}v_1\ell^{b_1}\\v_2\ell^{b_2}\end{pmatrix}\]
be these two elements, with the $u_i$ and $v_i$ units in $\ZZ_\ell$.  By standard linear algebra, we may re-write this basis, and multiply by a power of $\ell$, to obtain $\ell^s e_1$, $\ell^s e_2\in H_\infty$, where $e_1$ and $e_2$ are the standard basis vectors.  But then $\ell^s T_\ell(E)\subseteq H_\infty$, and so
\[(T_\ell(E):H_\infty)\leq (T_\ell(E):\ell^sT_\ell(E))=\ell^{2s}.\]
This completes the proof. 
\end{proof}

We now state the main claim of this section, namely that Galois acts nearly as freely on $[\ell]^{-n}P$ as the group structure allows.

\begin{lemma}\label{orbits}
The number of distinct Galois orbits in $[\ell]^{-s}P$, over $K_\infty$, is bounded by \[(E[\ell^s]:\im(f_s))\leq (T_\ell(E):H_\infty).\]
\end{lemma}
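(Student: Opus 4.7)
The approach is to parametrize $[\ell]^{-s}P$ using the fixed preimage $Q_s$, translate the $\Gal(\overline{K_\CC}/K_\infty)$-action into a statement about the cocycle $f_s$, and then obtain both inequalities from routine module theory. Since all the real work has already been done in Lemmas~\ref{cohom} and \ref{hlemma}, this argument should be short.

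First I would fix the point $Q_s \in E(\overline{K_\CC})$ with $\ell^s Q_s = P$ used to construct $f_s$, and observe that every element of $[\ell]^{-s}P$ has the form $Q_s + T$ for a unique $T \in E[\ell^s]$. This gives a bijection $[\ell]^{-s}P \leftrightarrow E[\ell^s]$ which is the essential book-keeping device.

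Next I would unpack when two such points $Q_s+T_1$ and $Q_s+T_2$ lie in the same orbit under $\Gal(\overline{K_\CC}/K_\infty)$. Because $E[\ell^s]\subseteq E(K_\infty)$, every $\sigma$ in this group fixes $T_1$ and $T_2$ pointwise, so $\sigma(Q_s+T_1)=Q_s+T_2$ reduces to $\sigma(Q_s)-Q_s = T_2-T_1$, i.e.\ $f_s(\sigma)=T_2-T_1$. Thus the $K_\infty$-Galois orbits on $[\ell]^{-s}P$ are exactly the fibres of the map $E[\ell^s]\to E[\ell^s]/\im(f_s)$ sending $T$ to the coset of $T$. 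Hence the number of orbits is $(E[\ell^s]:\im(f_s))$, which is the first inequality.

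For the second inequality, I would note that by construction $f_\infty$ is the inverse limit of the $f_s$, so $\im(f_s)$ is precisely the image of $H_\infty$ under the natural projection $T_\ell(E)\twoheadrightarrow T_\ell(E)/\ell^s T_\ell(E)=E[\ell^s]$. Consequently $E[\ell^s]/\im(f_s) = T_\ell(E)/(H_\infty+\ell^s T_\ell(E))$ is a quotient of $T_\ell(E)/H_\infty$, yielding $(E[\ell^s]:\im(f_s))\leq (T_\ell(E):H_\infty)$. The only point to be careful about is the compatibility of $f_s$ with $f_{s+1}$ under multiplication by $\ell$ — which is exactly what the commutative diagram preceding the statement records — so that $H_\infty$ really does reduce to $\im(f_s)$ modulo $\ell^s$. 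There is no substantive obstacle here.
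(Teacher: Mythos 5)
Your proposal is correct and follows essentially the same approach as the paper's own proof: parametrize $[\ell]^{-s}P$ by $E[\ell^s]$ via $Q_s$, identify orbits with cosets of $\im(f_s)$, and deduce the second bound from the (surjective) projective-system structure relating $H_\infty$ to the $\im(f_s)$. Your treatment of the second inequality via $E[\ell^s]/\im(f_s)\cong T_\ell(E)/(H_\infty+\ell^s T_\ell(E))$ is a slightly more explicit rendering of the paper's remark that $T_\ell(E)/H_\infty$ is the inverse limit of the $E[\ell^s]/\im(f_s)$, but the substance is the same.
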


\begin{proof}
Recall the point $Q_s$ such that $f_s(\sigma)=\sigma(Q_s)-Q_s$.  By definition, the Galois orbit of $Q_s$ is simply  $Q_s+\im(f_s)$.  Any other Galois orbit is of the form $Q'+\im(f_s)$, for some $\ell^s Q'=P$.  For each such $Q'$, there is a $\xi'\in E[\ell^s]$ such that $Q'=Q_s+\xi'$, and so the Galois orbit of $Q'$ is $Q_s+\xi'+\im(f_s)$.  This gives an explicit bijection between the Galois orbits of $\ell^{-s}P$, and the cosets of $\im(f_s)$ by $E[\ell^s]$.
That this number is bounded by the index $(T_\ell(E):H_\infty)$ follows from the fact that $T_\ell(E)/H_\infty$ is the projective limit of the groups $E[\ell^s]/\im(f_s)$.
\end{proof}


\section{Tate uniformization and ramification}\label{tate}

The aim of this section is to show that the tower of preimage curves described in Section~\ref{outline} is sufficiently ramified.  One can see, geometrically, why this must be true: if $v$ is a place of $C$ over which $E/K_\CC$ has split multiplicative reduction, the the fibre of the N\'{e}ron model over $v$ is the union of $v(j_E)$ lines (with intersection points removed).  The restriction of $[\ell]$ to any of the components of this N\'{e}ron polygon is an $\ell$-to-one map to some other component.
In particular, if the group of sections on $\Es$ contains all of the $\ell^n$th preimages of $P$, then $\ell^n\mid v(j_E)$.  This argument applies to extensions $\Es\times_C C'$ (given a N\'{e}ron model over the extension), as well, and so if $w$ is a prolongation of $v$ to a field over which $[\ell]^{-n}P$ is rational, for large $n$, we have $w(j_E)>v(j_E)$, so $v$ is ramified in this extension.

Although the geometric argument above can be turned into a proof, it is not entirely straightforward, in particular because the relationship between the the N\'{e}ron model of $\Es$ and the N\'{e}ron model of the base extension of $\Es\times_C C'$, for some curve $C'\rightarrow C$, is somewhat subtle when the covering is ramified.  We obtain a  simpler proof by considering the function field version of the problem. 
  The main tool is Tate's $v$-adic uniformization of elliptic curves, which is described over number fields in \cite[V.3-V.6]{jhs_advanced}.  The results over function fields are identical, and may be found in \cite{roquette}.
  
  Throughout this section, we fix a prime $v$ at which $E/K_\CC$ has split multiplicative reduction (we assume that one exists), and we suppose that $P$ does not reduce to the singular point modulo this prime.  In the proof of Theorem~\ref{main}, we will reduce the problem to the case where these assumptions hold.

\begin{thm}[Tate]
Let $F$ be a field, complete with respect to the non-archimedean valuation $v$, and suppose that $E/F$ is an elliptic curve with split multiplicative reduction at $v$.  Then there is a unique $q\in F^*$ with $|q|_v<1$, and maps such that
$$0\longrightarrow q^\ZZ\longrightarrow F^*\longrightarrow E(F)\longrightarrow  0$$
is an exact sequence.  Furthermore, if $F'/F$ is a Galois extension, then the corresponding sequence is an exact sequence of Galois modules.
\end{thm}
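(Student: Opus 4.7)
The plan is to prove this via the explicit construction of Tate's $q$-parameter and uniformization, paralleling the number-field argument laid out in \cite[V.3--V.6]{jhs_advanced}. Working over a $v$-adically complete field, absolutely convergent power series in a variable $q$ with $|q|_v < 1$ play the role that uniformly convergent series in $e^{2\pi i\tau}$ play in the complex-analytic theory of elliptic functions.

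First, for each $q\in F^*$ with $|q|_v<1$ I would define the \emph{Tate curve} $E_q$ by the Weierstrass equation $y^2+xy=x^3+a_4(q)x+a_6(q)$, where the coefficients $a_4(q)=-5\sum_{n\geq 1}n^3q^n/(1-q^n)$ and $a_6(q)=-\tfrac{1}{12}\sum_{n\geq 1}(7n^5+5n^3)q^n/(1-q^n)$ converge $v$-adically. A direct calculation gives the discriminant $\Delta(q)=q\prod_{n\geq 1}(1-q^n)^{24}$, so $E_q$ is non-singular with split multiplicative reduction at $v$, and its $j$-invariant is the familiar series $j(q)=q^{-1}+744+196884q+\cdots$ with integer coefficients.

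Second, I would construct the uniformization map $\phi\colon F^*/q^\ZZ\to E_q(F)$ using Tate's classical series $X(u,q)$ and $Y(u,q)$: for $u\in F^*\setminus q^\ZZ$, the doubly infinite sums converge $v$-adically because the tails are bounded by $|q|_v^{|n|}$. Verifying that $(X(u,q),Y(u,q))$ lies on $E_q$, is invariant under $u\mapsto uq$, and that $\phi$ is a group homomorphism all reduce to formal power-series identities (essentially the addition formulas for theta functions) that can be checked universally, independently of the field $F$. The map is manifestly Galois equivariant for any Galois extension $F'/F$, since the defining series have coefficients in $\ZZ[\tfrac{1}{12}]$ and $q$ lies in the base field $F^*$.

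Third --- and this is the principal obstacle --- I must show that $\phi$ is bijective onto $E_q(F)$ and that \emph{every} $E/F$ with split multiplicative reduction is isomorphic to some $E_q$. Bijectivity comes from reducing modulo $v$ and lifting via a Hensel-type argument: the filtration of $F^*$ by the unit group and by higher unit subgroups matches the stratification of $E_q(F)$ by the N\'eron component group and by the formal group. For the converse, the series $j(q)$ has leading term $q^{-1}$ and can be inverted as a power series in $j^{-1}$, so on the locus $v(j)<0$ there is a unique $q$ with $j(q)=j$; applied to $j=j_E$ this yields the desired $q$. Finally, since $E$ and $E_q$ share both the $j$-invariant $j_E$ and split multiplicative reduction, their twisting class in $H^1(F,\{\pm 1\})$ must vanish (a nontrivial quadratic twist would either destroy multiplicativity or turn split reduction into non-split), so $E\cong E_q$ over $F$. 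Exactness as a sequence of Galois modules is then automatic, since every ingredient --- the coefficients of the series, the parameter $q$, and the identifying isomorphism --- is defined over the base field $F$.
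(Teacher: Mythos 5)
The paper does not prove this theorem: it states Tate's $v$-adic uniformization as a quoted result, citing Silverman (\emph{Advanced Topics}, V.3--V.6) for the number-field case and Roquette for the function-field case, so there is no proof in the paper to compare against. Your sketch is a faithful reconstruction of the standard argument from those references --- the explicit power-series Tate curve $E_q$, the theta-series map $\phi$, inversion of the $j(q)$ series to produce the parameter $q$ from $j_E$, and the twisting analysis matching an arbitrary $E$ of split multiplicative reduction to some $E_q$.

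Two points would need a little more care in a complete write-up. First, the claim that a nontrivial quadratic twist of $E_q$ ``would either destroy multiplicativity or turn split reduction into non-split'' is correct when the residue characteristic is different from $2$; in residue characteristic $2$ quadratic twists are not classified by $F^*/(F^*)^2$ and one needs a separate cocycle argument, as Silverman does. This is harmless in the paper's application, since there the field is $\widehat{K_\CC}$ with residue field $\CC$. Second, exactness of the sequence of Galois modules over an infinite extension (such as $\overline{F}/F$) is not quite ``automatic'' from $\phi$ being defined over $F$: one proves surjectivity of $\phi$ over each finite subextension, which is again complete, and then passes to the direct limit; descending to intermediate fields then uses $H^1\bigl(\Gal(\overline{F}/F'), q^\ZZ\bigr)=0$, which holds because $q^\ZZ$ is a torsion-free trivial module over a profinite group. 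These are routine supplements, and the overall strategy matches the cited source.
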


In essence, completing with respect to a prime of split multiplicative reduction, then, allows us to glean a lot of information about the elliptic curve $E$, by considering the multiplicative group of the completion of the field.  The following simple lemma will be used below.

\begin{lemma}\label{powers}
Let $F$ be field, complete with respect to the normalized discrete valuation $v$,  with ring of integers $R$, and an algebraically closed residue field $R/v$.
Then for any $\alpha\in F^*$, we have $\alpha\in (F^*)^n$ if and only if $n\mid v(\alpha)$.
 \end{lemma}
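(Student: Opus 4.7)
The plan is straightforward, essentially a Hensel's lemma computation after peeling off the contribution from the value group.

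The direction ($\Rightarrow$) is immediate from multiplicativity: if $\alpha = \beta^n$ then $v(\alpha) = nv(\beta)$, so $n \mid v(\alpha)$.

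For the nontrivial direction ($\Leftarrow$), I would fix a uniformizer $\pi \in R$, write $v(\alpha) = nk$ by hypothesis, and set $u = \alpha/\pi^{nk} \in R^*$. The claim reduces to showing that every unit of $R$ is an $n$-th power, because then $u = w^n$ for some $w \in R^*$ gives $\alpha = (\pi^k w)^n$. Let $\bar{u} \in R/v$ denote the residue of $u$; since the residue field is algebraically closed and $\bar{u} \neq 0$, we may pick a nonzero $\bar{w} \in R/v$ with $\bar{w}^n = \bar{u}$. Apply Hensel's lemma to the polynomial $f(x) = x^n - u \in R[x]$: we have $f(\bar{w}) = 0$ in $R/v$, and $f'(\bar{w}) = n \bar{w}^{n-1}$ is a unit in $R/v$ because $\bar{w} \neq 0$ and $n$ is invertible in the residue field. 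Hence $\bar{w}$ lifts uniquely to a root $w \in R^*$ of $f$, completing the proof.

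The main technical point is the invertibility of $n$ in the residue field, needed for $f'(\bar{w}) \neq 0$ so that the naive form of Hensel's lemma applies. In the setting of Section~\ref{tate}, this is automatic: we will apply the lemma to completions of $K_\CC = K \otimes_k \CC$ at places of the base curve, whose residue fields are $\CC$ (characteristic zero). Were the residue characteristic $p > 0$ and $p \mid n$, the conclusion would genuinely fail (for instance, $1 + t$ has no $p$-th root in $\overline{\mathbb{F}_p}[[t]]$), so the characteristic-zero hypothesis implicit in the paper's setting is essential and not merely a convenience.
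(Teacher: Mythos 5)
Your proof is correct and follows essentially the same strategy as the paper's: peel off the power of the uniformizer, reduce to the case of a unit, and then apply Hensel's lemma to $x^n - u$ using the algebraic closedness of the residue field. Your observation about the implicit residue-characteristic hypothesis is a genuine one: as stated, the lemma is false when the residue characteristic divides $n$ (your $1+t \in \overline{\FF}_p[[t]]$ example is correct), and the paper glosses over this by asserting without comment that $x^n - r_v(\alpha)$ has a \emph{simple} root, which needs $n$ invertible in $R/v$ — harmless in the paper because the only application is with residue field $\CC$.
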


\begin{proof}
In one direction, note that if $\alpha\in (F^*)^n$, then $v(\alpha)=nv(\beta)$ for some $\beta\in F^*$.  It follows at once that $n\mid v(\alpha)$.

  Let $r_v:R\rightarrow R/v$ be the reduction-modulo-$v$ map.
First suppose that $\alpha\in R^*$.  Then $r_v(\alpha)\neq 0$, and (since the residue field is algebraically closed), $x^n-r_v(\alpha)$ has a simple root in $R/v$.  By Hensel's Lemma \cite[p.~34]{serre_local}, there is a root of $x^n-\alpha$ in $R$.  If $u$ is such a root, then $nv(u)=v(\alpha)=0$, and so $u\in R^*$, whence $\alpha=u^n\in (R^*)^n$.

Now suppose that $\alpha\in F^*\setminus R^*$.  By taking reciprocals if necessary, suppose that $\alpha\in R$. If $v(\alpha)=m$, write $\alpha=\gamma\pi^m$, where $\pi$ is a uniformizer for $v$, and $\gamma\in R^*$.  By the previous argument, $\gamma=u^m$ for some $u\in R^*$, and so $\alpha=(u\pi)^m\in(F^*)^m$.
\end{proof}

In order to state our next result, we will define the \emph{ramification tree} of the point $Q_0=P_v$ on $\Gamma_0\subseteq\Es$.  The nodes of the tree are the points on the curves $\widetilde{\Gamma}_n$ which map down to $Q_0$, with a point $Q_{n+1}$ on $\widetilde{\Gamma}_{n+1}$ linked to a point $Q_n$ on $\widetilde{\Gamma}_n$ if $Q_{n+1}$ maps to $Q_n$ by the map induced by $[\ell]$.  We will weight these edges with the ramification index of this map at $Q_{n+1}$, so that the weights of the edges above any given point sum to $\ell^2$.  For convenience, we will refer to the points on $\widetilde{\Gamma}_n$ as nodes \emph{at level $n$} in our tree.  When we speak of nodes \emph{above} $Q$, we mean nodes at the level immediately above that of $Q$, which are connected to $Q$ by an edge.

We may give an equivalent definition of the ramification tree in terms of function fields.
Identifying points of $C(\CC)$ with the corresponding valuations on $K_\CC$, points above $Q_0$, on the components of the curves $\widetilde{\Gamma}_n$,  correspond to valuations on the function fields of the corresponding components which
  extend $v$, and so we may take the valuations to be the nodes of our tree.  The valuation $v_{n+1}$ of $\CC(\widetilde{\Gamma}_{n+1}^{(i)})$ is linked
  to the valuation $v_n$ of $\CC(\widetilde{\Gamma}_{n}^{(j)})$ just if the former field extends the latter (i.e., $\widetilde{\Gamma}_{n+1}^{(i)}$ maps onto $\widetilde{\Gamma}_{n}^{(j)})$, and $v_{n+1}\mid v_n$.  Again, the weights on the edges are simply the ramification indices $e(v_{n+1}/v_n)$.  Note that the Galois orbits in $[\ell]^{-n}P$ correspond to the components of $\CC(\widetilde{\Gamma}_n)$, with $\CC(\widetilde{\Gamma}_n)\cong K_\CC(Q_n)$ for $Q_n\in [\ell]^{-n}P$ any representative of the appropriate Galois orbit.

It turns out that there are only three possible types of branching above a node in our tree: there might be
\begin{enumerate}
\item  $\ell^2$ edges above a given node, each
necessarily of weight 1; 
\item  $\ell$ edges of weight 1, and $\ell-1$ of weight $\ell$; or, 
\item $\ell$ edges, each of weight $\ell$. 
\end{enumerate}
The remainder of the section is devoted to proving this, and establishing the exact structure of the tree.


Let $\widehat{K_\CC}$ be the completion of $K_\CC$ with respect to $v$.  We recall some basic facts about extensions of local fields (see \cite[Section II.3]{serre_local}).  
  If $L=K_\CC(Q, E[\ell^n])$, for some $Q\in E(\overline{K_\CC})$ with $[\ell]^nQ=P$, then $L$ is a Galois extension of $K_\CC$.  If we fix a prolongation $w$
  of $v$ to $L$, then the decomposition group \[D_w=\left\{\sigma\in\Gal(L/K_\CC):w\circ\sigma= w\right\}.\] of $w/v$ is precisely the Galois group of
  $\widehat{L}/\widehat{K_\CC}$.  If $Q_1, ..., Q_g$ are a complete set of representatives of the $\Gal(L/K_\CC)$-orbits in $[\ell]^{-n}P$, then the
  prolongations of $v$ to the (distinct) fields $L(Q_i)$ are simply the valuations of the form $w\circ\sigma$, for $\sigma\in \Gal(L/K_\CC)$.  Two automorphisms generate the same valuation, if and only if they are in the same coset of $D_{w}$ in $\Gal(L/K_\CC)$.  Thus, the prolongations are exactly determined by the $D_{w}$-orbits (i.e., the $\Gal(\widehat{L}/\widehat{K_\CC})$-orbits) in $[\ell]^{-n}P$.  
Additionally, since all residual degrees are 1 (the residue field is always $\CC$), the ramification index $e(w/v)$ of the prolongation associated to the decomposition orbit containing $Q$ is exactly $[\widehat{K_\CC}(Q):\widehat{K_\CC}]$ (this is true because, as we will see below, $\widehat{K_\CC}(Q)/\widehat{K_\CC}$ is a Galois extension, even though $K_\CC(Q)/K_\CC$ may not be).
Thus, the nodes at level $n$ in our tree correspond to distinct orbits in $[\ell]^{-n}P$ under the decomposition group of some fixed valuation of $L$ extending $v$.  We will suppose throughout that we have extended $v$ in some way to $\overline{K_\CC}$, and the \emph{decomposition group} of a field $L$ will always refer to the decomposition group of the restriction of this valuation to $L$.

More generally, if $w$ is a prolongation of $v$ corresponding to the point $Q\in [\ell]^{-n}P$, then the nodes above $w$ in the ramification tree correspond to the decomposition orbits of points $Q'\in[\ell]^{-1}Q$.  Given a prolongation $w'$ corresponding to (the decomposition orbit of) $Q'$, the ramification index $e(w'/w)$ is simply \[[\widehat{K_\CC}(Q'):\widehat{K_\CC}(Q)]=[\widehat{K_\CC}(Q'):\widehat{K_\CC}]/[\widehat{K_\CC}(Q):\widehat{K_\CC}].\]


By Tate's $v$-adic uniformization, there is a unique $q\in \widehat{K_\CC}^*$ such that for any Galois extension $F/\widehat{K_\CC}$, we have an exact sequence
$$0\longrightarrow q^\ZZ\longrightarrow F^*\stackrel{\phi}{\longrightarrow} E(F)\longrightarrow  0$$
which respects the action of the Galois group (which acts trivially on $q$).  We point out that the units $R^*$ map, by $\phi$, onto the connected component $E_0(\widehat{K_\CC})$ containing the identity (see  \cite[p.~431]{jhs_advanced}).  It is also worth noting that $E[\ell^n]$ is generated by $\phi(q^{1/\ell^n})$ and $\phi(\zeta_{\ell^n})$, where $q^{1/\ell^n}$ is some $\ell^n$th root of $q$, and $\zeta_{\ell_n}$ is a primitive $\ell^n$th root of unity.  Note that, since $\zeta_{\ell^n}\in\CC\subseteq F$, the group $E(\widehat{K_\CC})$ contains at least cyclic $\ell^n$-torsion, for all $n$.

We now consider the points in $[\ell]^{-n}P$.  Recall that we are assuming $P\in E_0(\widehat{K_\CC})$, and thus we may fix, once and for all, a value $\beta\in R^*$ with $\phi(\beta)=P$.  By Lemma~\ref{powers}, $\beta\in (R^*)^{\ell^n}$, for all $n$, and so we will fix a compatible system of roots $\beta^{1/\ell^n}\in R^*$ (compatible in the sense that $(\beta^{1/\ell^{n+1}})^\ell=\beta^{1/\ell^n}$).  The elements of $[\ell]^{-n}P$ are precisely the images under $\phi$ of the points
\[\left\{\beta^{1/\ell^n}q^{a/\ell^n}\zeta_{\ell^n}^b: 0\leq a, b<\ell^n\right\}.\]
Suppose that $Q=\phi(\beta^{1/\ell^n}q^{a/\ell^n}\zeta_{\ell^n}^b)$.
Since $\zeta_{\ell^n}\in\CC\subseteq K_\CC$, for all $n$, and since $\beta^{1/\ell^n}\in R^*\subseteq \widehat{K_\CC}$, for all $n$, we note that
\[\widehat{K_\CC}(Q)=\widehat{K_\CC}(\beta^{1/\ell^n}q^{a/\ell^n}\zeta_{\ell^n}^b)=\widehat{K_\CC}(q^{a/\ell^n}).\]
In particular, if $q^a$ has order $\ell^m$ in $\widehat{K_\CC}^*/(\widehat{K_\CC}^*)^{\ell^n}$, then the conjugates of $Q$ by the decomposition group are simply the elements of the form $\phi(\beta^{1/\ell^n}q^{a/\ell^n}\zeta_{\ell^n}^b\zeta_{\ell^m}^c)$, for $c\in \ZZ/\ell^m\ZZ$.  In other words, the extension $\widehat{K_\CC}(Q)/\widehat{K_\CC}$ is a cyclic Galois extension  of order $\ell^{s}$, where $0\leq s\leq n-\ord_\ell(v(q))$ is the greatest value such that $a\equiv 0\MOD{\ell^s}$.

In particular, the quantity $a\in\ZZ/\ell^n\ZZ$ is an invariant of the decomposition orbit of $Q=\phi(\beta^{1/\ell^n}q^{a/\ell^n}\zeta^b_{\ell^n})$ (although there may be more than one orbit with the same value $a$), and hence an invariant of the corresponding node at level $n$ in the ramification tree.  Furthermore, if $a'\in\ZZ/\ell^{n+1}\ZZ$ is the corresponding quantity for a node corresponding to the decomposition orbit of $Q'\in[\ell]^{-1}Q$, then $a'\equiv a\MOD{\ell^n}$.   In order to describe the structure of the tree, we will set \[m=\ord_\ell(v(q))=\ord_\ell(v(j_E)),\] and say that a node has
%
\begin{enumerate}
\item Type A if $n< m$;
\item Type B$_r$ if $n\geq m$ and $a\equiv 0\MOD{\ell^{n-m}}$, and $0\leq r\leq m$ is the greatest value with $a\equiv 0\MOD{\ell^{n+r-m}}$; and 
\item Type C if $n\geq m$ and $a\not\equiv 0\MOD{\ell^{n-m}}$.
\end{enumerate}
The following lemma describes the structure of the ramification tree:
\begin{lemma}\label{ramificationtree}
The ramification tree observes the following rules:
\begin{enumerate}
\item all nodes at level $0$ through $m-1$ have Type A (if $m=0$, then there are no such nodes), and each of these nodes have $\ell^2$ distinct edges above them (necessarily each of weight 1);
\item all nodes at level $m$ are Type B$_r$ nodes, for some $0\leq r\leq m$, and at any level of the tree,
\begin{enumerate}
\item if $m=0$, then a Type B$_0$ node has $\ell$ Type B$_0$ nodes above it, with weight 1 each, and $\ell-1$ Type C nodes above it, with weight $\ell$ each;
\item if $m\geq 1$, then a Type B$_0$ node has $\ell$ Type C nodes above it, each with weight $\ell$;
\item a Type B$_r$ node, for $1\leq r<m$, has $\ell^2$ Type B$_{r-1}$ nodes above it, each with weight 1;
\item if $m\geq 1$, a Type B$_m$ node has $\ell$ nodes of Type B$_m$ above it, each of weight 1, and $\ell-1$ nodes of Type B$_{m-1}$ above it, each of weight $\ell$;
\end{enumerate}
\item each Type C node has $\ell$ Type C nodes above it, each with weight $\ell$.
\end{enumerate}
\end{lemma}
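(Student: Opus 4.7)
The plan is to work entirely on the $\widehat{K_\CC}$-side of Tate's uniformization, and to classify the decomposition orbits in $[\ell]^{-n}P$ in terms of explicit coordinates. After fixing $\beta \in R^*$ with $\phi(\beta)=P$ (as in the paragraph preceding the lemma) and writing $v(q) = \ell^m u$ with $u$ coprime to $\ell$, every point of $[\ell^n]^{-1}P$ takes the form $Q = \phi(\beta^{1/\ell^n} q^{a/\ell^n} \zeta_{\ell^n}^b)$ for $a, b \in \{0,1,\ldots,\ell^n-1\}$. Since $\beta \in R^*$ and the residue field is algebraically closed, Lemma~\ref{powers} (an avatar of Hensel) yields $\beta^{1/\ell^n} \in R^*$, while $\zeta_{\ell^n} \in \CC \subseteq \widehat{K_\CC}$; so $\widehat{K_\CC}(Q) = \widehat{K_\CC}(q^{a/\ell^n})$.

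A short Kummer computation with Lemma~\ref{powers} applied to $q^a$ shows that the order of $q^a$ in $\widehat{K_\CC}^*/(\widehat{K_\CC}^*)^{\ell^n}$, and hence $[\widehat{K_\CC}(Q):\widehat{K_\CC}]$, equals $\ell^{n-s_n(a)}$ where I set $s_n(a) := \min\bigl(m + \ord_\ell(a),\, n\bigr)$. The Galois action on $q^{a/\ell^n}$ is by an $\ell^{n-s_n(a)}$-th root of unity, which through Tate's map $\phi$ translates to the shift $b \mapsto b + c\ell^{s_n(a)}$. Thus the nodes at level $n$ (i.e.\ the $\Gal(\widehat{K_\CC}^{\mathrm{alg}}/\widehat{K_\CC})$-orbits in $[\ell^n]^{-1}P$) are parametrized by pairs $(a, b \bmod \ell^{s_n(a)})$, and the three type labels A, $\mathrm{B}_r$, C turn out to be exactly the conditions $n<m$, $\ord_\ell(a) = n-m+r$ (or $a=0$, $r=m$), and $\ord_\ell(a) < n-m$, respectively.

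The proof of each of the claims (1)--(3) is then a direct case analysis of the relation between a node $(a,b_0)$ at level $n$ and the nodes at level $n+1$ lying above it. Writing $[\ell]$ in Tate coordinates as $\alpha \mapsto \alpha^\ell$, the preimages of $(a,b_0)$ are indexed by $a' = a + c\ell^n$, $b' = b_0 + d\ell^n$ with $c,d \in \{0,\ldots,\ell-1\}$. For each value of $c$ one computes $\ord_\ell(a')$ — here the subcases $c = 0$ versus $c \neq 0$ matter, and for nonzero $c$ whether or not $\ord_\ell(a) < m$ controls whether $\ord_\ell(a+c\ell^n) = \ord_\ell(a)$ or jumps to $n$ — then $s_{n+1}(a')$, and so the type of the node and its orbit size. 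The ramification index of the edge is the ratio $[\widehat{K_\CC}(Q'):\widehat{K_\CC}(Q)] = \ell^{1 + s_n(a) - s_{n+1}(a')}$, and reading off cases yields rules (1), (2a)--(2d), and (3).

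The main obstacle is the bookkeeping at the boundary $n = m$ and, more generally, at nodes where the cap in $s_{n+1}(a') = \min(m + \ord_\ell(a'),\, n+1)$ is active for some choices of $c$ but not others: this is what produces the asymmetric splittings in (2a) and (2d) and the purely unramified behaviour in (2c). Once the rule for $s$ is in hand, however, each case reduces to elementary $\ell$-adic arithmetic, and the total $\ell^2$ of the edge weights above any node can be used as a consistency check at every step.
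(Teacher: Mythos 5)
Your proposal follows the paper's proof closely in spirit: parametrize $[\ell]^{-n}P$ in Tate coordinates as $\phi(\beta^{1/\ell^n}q^{a/\ell^n}\zeta_{\ell^n}^b)$, compute $[\widehat{K_\CC}(Q):\widehat{K_\CC}]$ from the order of $q^a$ in $\widehat{K_\CC}^*/(\widehat{K_\CC}^*)^{\ell^n}$ using Lemma~\ref{powers}, and then track how $(a,b)$-labels lift to level $n+1$. Your packaging $s_n(a)=\min(m+\ord_\ell(a),\,n)$ is correct, as is your reformulation of the type labels. The differences from the paper are notational, not conceptual.

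That said, two substantive remarks. First, the assertion that for nonzero $c$ ``whether or not $\ord_\ell(a)<m$ controls whether $\ord_\ell(a+c\ell^n)=\ord_\ell(a)$ or jumps to $n$'' is not right. The jump occurs exactly when $a=0$ in $\ZZ/\ell^n\ZZ$ (i.e.\ $\ord_\ell(a)\geq n$) and $c\neq 0$; the threshold is $n$, not $m$, and this distinction is precisely what the case analysis hinges on, so it should be stated correctly.

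Second, and more importantly: if you actually push your $s$-formula through the cases, you will find that rule (2d) of the lemma does \emph{not} come out as stated. For a Type B$_m$ node at level $n\geq m\geq 1$ one has $a=0$, and every preimage with $a'=c\ell^n$, $c\neq 0$, has $\ord_\ell(a')=n$ and hence $s_{n+1}(a')=\min(m+n,\,n+1)=n+1$, so $\widehat{K_\CC}(Q')=\widehat{K_\CC}$ and the corresponding edge has weight $1$, not $\ell$. The correct version of (2d) for $m\geq 1$ is: there are $\ell^2$ nodes above, all of weight $1$, of which $\ell$ have Type B$_m$ (from $a'=0$) and $\ell(\ell-1)$ have Type B$_{m-1}$. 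The paper's own proof slips here by recycling the $m=0$ computation (where $\min(m+n,n+1)=n$) and checking $a'\equiv 0\MOD{\ell^{n+1}}$ instead of $a'\equiv 0\MOD{\ell^{n+1-m}}$. The error turns out to be harmless for Theorems~\ref{main} and \ref{special_th} — all that is used is that Type B$_{m-1}$ nodes occur above a Type B$_m$ node and that rules (2a), (2b), (2c), (3) hold, and those are fine — but you cannot simply assert that ``reading off cases yields (2a)--(2d)'': one of those rules cannot be read off, because it is false as written, and a complete proof should either correct the statement or flag the discrepancy.
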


Thus, when $\ord_\ell(j_E)=0$, the ramification tree looks something like this:
\begin{center}
\begin{tikzpicture}
	\draw (0,0) node [left] {\tiny $B_0$};

	\draw[<-] (0,0.2) -- (1, 1) node[right] {\tiny $B_0$};
	\draw[<-] (0,0.1) -- (1, 0.5) node[right] {\tiny $B_0$};
	\draw (0.5, 0.5) node[below] {\tiny $\vdots$};

	\draw[<-] (0,-0.1) -- (1, -0.5) node[right] {\tiny $C$};
	\draw[<-] (0,-0.2) -- (1, -1) node[right] {\tiny $C$};
	\draw (0.5, -0.5) node[below] {\tiny $\vdots$};

	\draw[<-] (1.5, 1.1) -- (2.5, 2) node[right] {\tiny $B_0$};	
	\draw[<-] (1.5, 1) -- (2.5, 1.5) node[right] {\tiny $B_0$};	
	\draw[<-] (1.5, 0.9) -- (2.5, 1) node[right] {\tiny $C$};	
	\draw (2, 1) node[below] {\tiny $\vdots$};

	\draw[<-] (1.5, -0.4) -- (2.5, 0) node[right] {\tiny $C$};	
	\draw[<-] (1.5, -0.5) -- (2.5, -0.4) node[right] {\tiny $C$};	
	\draw (2, -0.5) node[below] {\tiny $\vdots$};
\end{tikzpicture}
\end{center}
We should point out that the lemma above does not uniquely define the structure of the tree, but only gives the information that we will need in the proof of Theorem~\ref{main}.  In the proof below, however, it is pointed out that there are precisely $\ell^{m}\varphi(\ell^{m-r})$ nodes of Type B$_r$ at level $m$, for each $0\leq r\leq m$, where $\varphi$ is the Euler totient function. This fact (combined with Lemma~\ref{ramificationtree}) uniquely determines the full structure of the tree.

\begin{proof}
The claim that all nodes at levels $0$ through $m-1$ have Type A is clear from the definition, as is the claim that all nodes at level $m$ have Type B$_r$, for some $0\leq r\leq m$ (since the condition $a\equiv 0\MOD{\ell^0}$ is satisfied trivially).  We now conduct a case-by-case examination of the nodes immediately above nodes of any given Type, proving the various claims in the lemma.  Throughout, we take $Q=\phi(\beta^{1/\ell^n}q^{a/\ell^n}\zeta_{\ell^n}^b)$ as a representative of the appropriate decomposition orbit, and consider the decomposition orbits of possible \[Q'=\phi(\beta^{1/\ell^{n+1}}q^{a'/\ell^{n+1}}\zeta_{\ell^{n+1}}^{b'})\in[\ell]^{-1}Q.\]

\vspace{2mm}
\noindent\textbf{Case: Type A}
 
First, suppose that $n<m$.  Then $q^{a'/\ell^{n+1}}$ is $\widehat{K_\CC}$-rational, for all $a'\in \ZZ/\ell^{n+1}\ZZ$, since $q\in (\widehat{K_\CC}^*)^{\ell^m}$, and $n+1\leq m$.  In particular, every extension of the form $\widehat{K_\CC}(q^{a'/\ell^{n+1}})/\widehat{K_\CC}(q^{a/\ell^n})$ is trivial, and so there are $\ell^2$ distinct decomposition orbits above $Q$.  This gives $\ell^2$ distinct nodes above the node corresponding to $Q$, and each must be unramified, since their ramification indices sum to $\ell^2$ (or, simply because the inertia groups are trivial).  As noted above, if $n+1<m$, then these points are all Type A, while if $n+1=m$, then  the points above $Q$ have type B$_r$ for some $r$.  It is, in fact, quite easy to compute the number of nodes of Type B$_r$ at level $m$, for each $0\leq r\leq m$, thereby completely describing the structure of the tree.  The nodes of Type B$_r$ are in one-to-one correspondence with pairs $(a, b)$, with $b\in\ZZ/\ell^m\ZZ$, and $a\in\ZZ/\ell^m\ZZ$ of the form $\ell^ru$, for $u\in\ZZ/\ell^m\ZZ$ a unit.  In other words, there are precisely $\ell^{m}\varphi(\ell^{m-r})$ nodes of Type B$_r$ at level $m$, for each $0\leq r\leq m$.

\vspace{2mm}
\noindent\textbf{Case: Type B$_r$}

Now, suppose that $n\geq m$, and that $a\equiv 0\MOD{\ell^{n-m}}$, so that $Q$ has type B$_r$, for some $0\leq r\leq m$.  Since $q\in (\widehat{K_\CC}^*)^{\ell^m}$, say $q=q_0^{\ell^m}$, we see that the field
\[\widehat{K_\CC}(Q)=\widehat{K_\CC}(q^{a/\ell^n})=\widehat{K_\CC}(q_0^{a/\ell^{n-m}})\]
is simply $\widehat{K_\CC}$.  Now let $Q'=\phi(\beta^{1/\ell^{n+1}}q^{a'/\ell^{n+1}}\zeta_{\ell^{n+1}}^{b'})$ be an element of $\ell^{-1}Q$, so that $a'\equiv a\MOD{\ell^n}$, and $b'\equiv b\MOD{\ell^n}$. Consider the extension \[\widehat{K_\CC}(Q')/\widehat{K_\CC}(Q)=\widehat{K_\CC}(q^{a'/\ell^{n+1}})/\widehat{K_\CC}(q^{a/\ell^n}).\]
If $a'\equiv 0\MOD{\ell^{n+1-m}}$, then this extension is trivial, and each $Q'$ of this form is fixed by the decomposition group.  To each of these points (if there are any), corresponds a prolongation of $w$ to $K_\CC(Q')$ which is unramified.  If $a'\not\equiv 0\MOD{\ell^{n+1-m}}$, then this extension is cyclic of degree $\ell$.  The decomposition orbit of $Q'$, in this case, corresponds to a prolongation of $w$ to $K_\CC(Q')$ which is ramified of index $\ell$.

Now, if $Q$ has Type B$_r$, for some $r\geq 1$, then we have that $m\geq 1$, and $a\equiv 0\MOD{\ell^{n+1-m}}$.  So for all points $Q'$ above $Q$, we have $a'\equiv a\equiv 0\MOD{\ell^{n+1-m}}$, since $a'\equiv a\MOD{\ell^n}$, and $n+1-m\leq n$.  In this case, all points $Q'$ will be fixed by the decomposition group, and so correspond to unramified prolongations of $w$ to $K_\CC(Q')$.  If $r<m$, we have $a'\equiv a\not\equiv 0\MOD{\ell^{n+r+1-m}}$, and so each $Q'\in\ell^{-1}Q$ is a point of Type B$_{r-1}$.  If, on the other hand, $r=m$, then we have $a\equiv 0\MOD{\ell^n}$.  There is one choice $a'\in\ZZ/\ell^{n+1}\ZZ$ with $a'\equiv a\MOD{\ell^{n}}$ and $a'\equiv 0\MOD{\ell^{n+1}}$, and $\ell-1$ choices with $a'\not\equiv 0\MOD{\ell^{n+1}}$.  By the same argument, the choice $a'=0$ yields $\ell$ decomposition-invariant points, and so $\ell$ distinct, unramified prolongations of $w$.  Each of those points will again have Type B$_m$.  The $\ell-1$ choices with $a'\neq 0$ each yield a single decomposition orbit, and so a single prolongation of $w$ with ramification index $\ell$.  Since $a'\equiv 0\MOD{\ell^n}$, but $a'\not\equiv 0\MOD{\ell^n}$, these $\ell-1$ points have type B$_{m-1}$.
 
 Now suppose that $Q$ has Type B$_0$, which we will subdivide into two cases, depending on whether or not $m=0$.  If $m\geq 1$, then $a\equiv 0\MOD{\ell^{n-m}}$, but $a\not\equiv 0\MOD{\ell^{n+1-m}}$.  It follows that any point $Q'\in \ell^{-1}Q$, which must satisfy $a'\equiv a\MOD{\ell^n}$, satisfies $a'\not\equiv 0\MOD{\ell^{n-m}}$.  These points, then, are all Type C.  For each of these values of $a'\in\ZZ/\ell^{n+1}/\ZZ$, we find that $Q'$ is in a decomposition orbit of size $\ell$.  Thus, there are $\ell$ prolongations of $w$, each with ramification index $\ell$.  On the other hand, if $m=0$, then the condition on $a$ is simply that $a\equiv 0\MOD{\ell^n}$.  Consider the different values $a'\in\ZZ/\ell^{n+1}\ZZ$ with $a'\equiv a\MOD{\ell^n}$.  If $a'=0$, then the decomposition group acts trivially on $Q'$.  This gives $\ell$ distinct prolongations of $w$, each with ramification index $\ell$.  Each of the points corresponding to these prolongations has Type B$_0$.  The $\ell-1$ choices of $a'\neq 0$ each yield one prolongation, with Type $C$.
 
\vspace{2mm}
 \noindent\textbf{Case: Type C}
 
 In this case, we have $a\not\equiv 0\MOD{\ell^{n-m}}$, and so if $a'\in\ZZ/\ell^{n+1}\ZZ$, with $a'\equiv a\MOD{\ell^n}$, we have $a'\not\equiv 0\MOD{\ell^{n+1-m}}$.  Thus, any point $Q'\in\ell^{-1}Q$ is a Type C point.  Now, any of the extensions $\widehat{K_\CC}(q^{a'/\ell^{n+1}})/\widehat{K_\CC}(q^{a/\ell^{n}})$ must be cyclic of order $\ell$, since $q^{a'}$ generates a cyclic subgroup of $\widehat{K_\CC}^*/(\widehat{K_\CC}^*)^{\ell^{n+1}}$ of order $\ell^{n+1-m}$, while $q^a$ generates a cyclic subgroup of $\widehat{K_\CC}^*/(\widehat{K_\CC}^*)^{\ell^n}$ of order $\ell^n$.  Thus, there are $\ell$ primes above $w$, each with ramification index $\ell$, and corresponding to a point of Type C.
\end{proof}


\section{Proof of Theorem~\ref{main}}\label{mainproof}

We proceed with the proof of the main result, first making several reductions in order to simplify the argument.  For simplicity, we introduce the following notation:
\[X(k, B_1, \Es, P, t)=\Big\{Q\in \Es_t(\overline{k}):[k(Q):k]\leq B_1\text{ and }\ell^nQ=P_t\text{ for some }n\geq 1\Big\}.\]
Our first lemma shows that we are free to replace $\Es\rightarrow C$ by some base extension defined over an algebraic extension of $k$.  In other words, thinking in terms of the generic fibre $E/K$, it suffices to prove Theorem~\ref{main} for $E$ over an algebraic extension $K'/K$.
\begin{lemma}\label{base}
Let $k'/k$ be an algebraic extension, and let $\phi:C'\rightarrow C$ be a dominant morphism of curves defined over $k'$.  If Theorem~\ref{main} is true for an elliptic surface $\Es'\rightarrow C'$ birational to $\Es\times_C C'$, then it is true for $\Es$.
\end{lemma}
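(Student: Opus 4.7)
The plan is to lift the bounding problem from $\Es/C$ to $\Es'/C'$, where the hypothesized statement of Theorem~\ref{main} applies. I would first replace $k'$ by a finitely generated subfield containing the data needed to define $\phi$, $\Es'$, and the pulled-back section $P':C'\rightarrow\Es'$, so that $k'$ is itself a number field. The surface $\Es'$ inherits a non-constant $j$-invariant from $\Es$: the generic fibre of $\Es'$ is $E\otimes_K K'$ where $K=k(C)$ and $K'=k'(C')$, so $j_{\Es'}=j_\Es\circ\phi$ as an element of $K'$, which is non-constant because $\phi$ is dominant. Hence Theorem~\ref{main} is assumed to apply to $(\Es', P', \ell)$.

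Let $d=\deg\phi$. Given $t\in C(\overline{k})$ of good reduction with $[k(t):k]\leq B_2$, I would pick any $t'\in\phi^{-1}(t)$; since $k'/k$ is algebraic, $[k'(t'):k']\leq dB_2$. For any $Q\in X(k,B_1,\Es,P,t)$, the relation $t=\pi(Q)\in k'(Q)$ yields $k'(Q,t')=k'(Q)(t')$, so the lifted point $Q'\in \Es'_{t'}(\overline{k})$, obtained from $(Q,t')$ via the birational equivalence $\Es'\dashrightarrow\Es\times_C C'$, satisfies $[k'(Q'):k']\leq dB_1$. The indeterminacy locus of the birational map and the bad reduction locus of $\Es'$ each form finite subsets of $C'(\overline{k})$; let $\Sigma\subseteq C(\overline{k})$ be the (finite) union of their images under $\phi$. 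For $t\notin\Sigma$, the fibres $\Es_t$ and $\Es'_{t'}$ are canonically isomorphic over $k'(t')$, compatibly with $P_t\leftrightarrow P'_{t'}$ and with $[\ell]$, so $Q\mapsto Q'$ embeds $X(k,B_1,\Es,P,t)$ into $X(k',dB_1,\Es',P',t')$, which has size at most $M'=M(dB_1,dB_2)$ by hypothesis.

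For the finitely many exceptional $t\in\Sigma$, the set $X(k,B_1,\Es,P,t)$ is already finite: any $Q$ in it satisfies $\hat h(Q)=\ell^{-2n}\hat h(P_t)\leq \hat h(P_t)$ and $[k(Q):k]\leq B_1$, so a Northcott argument (combined, in the case $P_t$ is torsion, with Merel's uniform boundedness of bounded-degree torsion) leaves only finitely many possibilities for $Q$. I would then take $M$ to be the maximum of $M'$ and of these finitely many cardinalities. The main subtlety is purely geometric, namely the verification that $\Sigma$ is finite, which follows once one unpacks the fact that the indeterminacy locus of a birational map between smooth surfaces is finite and that $\Es'$ has bad reduction over only finitely many fibres.
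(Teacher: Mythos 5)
Your proof is correct and follows essentially the same strategy as the paper's: lift $t\in C(\overline{k})$ to a preimage $t'\in C'(\overline{k})$ of degree at most $\deg(\phi)$ larger, lift $Q$ to the corresponding $Q'\in\Es'_{t'}$, and apply the hypothesized bound to $(\Es',P')$. Your bookkeeping is in fact slightly more careful than the paper's --- you bound $[k'(Q'):k']$ by $\deg(\phi)\,B_1$ rather than $B_1$ (the paper asserts $X(k,B_1,\Es,P,\phi(t))=X(k,B_1,\Es',P',t)$ without adjusting $B_1$, overlooking that $k(Q')=k(Q,t')$ may strictly contain $k(Q)$), and you explicitly dispose of the finitely many excluded fibres via Northcott, a point the paper leaves implicit.
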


\begin{proof}
In proving Theorem~\ref{main}, it is clear that we are proving something stronger if we pass to an algebraic extension of $k$.  So we will, replacing $k$ by $k'$ if necessary, suppose that $\phi:C'\rightarrow C$ is defined over $k$.  Now let $\Es'$ be birational to $\Es\times_C C'$, and let $P:C\rightarrow\Es$ be a section.  Then $P$ lifts uniquely to a section $P':C'\rightarrow\Es'$, and since $\Es_t'\cong \Es_{\phi(t)}$ over $k$, for all but finitely many $t\in C'(\overline{k})$, we have immediately that
\[X(k, B_1, \Es, P, \phi(t))= X(k, B_1, \Es', P', t).\]
Now, for any $s\in C(\overline{k})$, there is some $t\in C'(\overline{k})$ with $\phi(t)=s$, and we have $[k(t):k]\leq \deg(\phi)[k(s):k]$.  Thus, we may conclude Theorem~\ref{main} for $\Es$ and $P$, with $B_1=D_1$ and $B_2=D_2$ by applying Theorem~\ref{main} to $\Es'$ and $P'$, with $B_1=D_1$ and $B_2=\deg(\phi)D_2$.
\end{proof}

From this point forward, we will assume that $\Es$ has been replaced with an appropriate base extension $\Es'$, so that there is at least one place $v\in C(\overline{k})$ such that the fibre $\Es_v$ is multiplicative.

\begin{lemma}\label{multbyN}
Let $k$, $\Es$, and $\ell$ be as in Theorem~\ref{main}, and let $N\geq 1$.  If Theorem~\ref{main} is true for $P=NP'$, then it is true for $P=P'$.
\end{lemma}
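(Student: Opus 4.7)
The plan is to relate the two counting sets directly by multiplication by $N$. Concretely, for each $t \in C(\overline{k})$ at which $\Es_t$ is non-singular, define
\[ \mu_N : X(k, B_1, \Es, P', t) \longrightarrow \Es_t(\overline{k}), \qquad Q \longmapsto NQ. \]
If $\ell^n Q = P'_t$, then $\ell^n(NQ) = N P'_t = (NP')_t$, and since $NQ$ is a rational expression in $Q$ we have $k(NQ) \subseteq k(Q)$, so $[k(NQ):k] \leq B_1$. Thus $\mu_N$ lands in $X(k, B_1, \Es, NP', t)$.

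Next I would estimate the fibres of $\mu_N$. If $\mu_N(Q_1) = \mu_N(Q_2) = R$, then $Q_1 - Q_2 \in \Es_t[N]$, so the preimage of any $R$ in $\Es_t(\overline{k})$ under multiplication by $N$ has exactly $|\Es_t[N]| = N^2$ elements, and \emph{a fortiori} the fibre of $\mu_N$ has at most $N^2$ elements. Combining,
\[ \# X(k, B_1, \Es, P', t) \;\leq\; N^2 \cdot \# X(k, B_1, \Es, NP', t). \]
By hypothesis, there is a constant $M_0(B_1, B_2)$ bounding the right-hand factor uniformly in $t$ of degree at most $B_2$ over $k$ at which $\Es_t$ is non-singular; then $M(B_1, B_2) := N^2 M_0(B_1, B_2)$ works for $P = P'$.

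There is no substantive obstacle here: the only small things to check are that $\mu_N$ is indeed well-defined into the $NP'$-set (immediate from $\ell^n(NQ) = (NP')_t$ and the field inclusion) and that the fibres have size at most $N^2$ (immediate from the group structure on $\Es_t$). The point of the lemma is that in conjunction with Lemma~\ref{base}, it lets us, in the proof of Theorem~\ref{main}, freely replace $P$ by any chosen nonzero multiple $NP$, which will be used to arrange that $P$ reduces to the identity component of the N\'eron model at the chosen place $v$ of multiplicative reduction, as required in Section~\ref{tate}.
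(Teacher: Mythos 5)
Your proof is correct and is essentially identical to the paper's: both map $X(k,B_1,\Es,P',t)$ into $X(k,B_1,\Es,NP',t)$ via $Q\mapsto NQ$, observe that $k(NQ)\subseteq k(Q)$, and bound the fibres of multiplication-by-$N$ by $N^2$. The only difference is that you spell out the fibre count a bit more explicitly than the paper does.
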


\begin{proof}
Suppose there is some point $Q\in\Es_t(\overline{k})$ with $\ell^nQ=P'_t$.  Then $\ell^n NQ=NP'_t$, and clearly $k(NQ)\subseteq k(Q)$ (as multiplication-by-$N$ is given by rational maps defined over $k$).  In particular, in the notation above, the image of $X(k, B_1, \Es, P', t)$ by the morphism $[N]:\Es_t\rightarrow \Es_t$ is wholly contained in the set $X(k, B_1, \Es, NP', t)$.
But this means that the cardinality of the former set  is at most $N^2$ times the cardinality of the latter.
\end{proof}

\begin{lemma}\label{merel}
Theorem~\ref{main} is true for $P$ a point of finite order on $E(K)$.
\end{lemma}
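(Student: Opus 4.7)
The plan is to reduce the lemma to Merel's theorem on uniform boundedness of torsion for elliptic curves over number fields of bounded degree. The key observation is that when $P$ has finite order in $E(K)$, each specialization $P_t$ is torsion of uniformly bounded order, and consequently any $Q$ with $\ell^n Q = P_t$ is itself a torsion point. Merel's theorem then caps the order of such a $Q$ in terms of $[k(Q):\QQ]$ alone, so the set in question is contained in a uniformly bounded torsion subgroup of $\Es_t$.

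Concretely, I would proceed as follows. Let $N$ denote the order of $P$ in $E(K) = \Es(C)$. For any $t\in C(\overline{k})$ of good reduction, the specialization map $\sigma_t$ is a homomorphism, so $NP_t = \sigma_t(NP) = 0$, and thus $P_t$ is torsion of order dividing $N$ on every good-reduction fibre. If $Q\in\Es_t(\overline{k})$ satisfies $\ell^n Q = P_t$, then $N\ell^n Q = 0$, so $Q$ is a torsion point of $\Es_t$. Next, suppose $[k(Q):k]\leq B_1$. Since $\Es_t$ is defined over $k(t)$ and $Q\in \Es_t(\overline{k})$, the residue field $k(t)$ embeds into $k(Q)$, giving
\[[k(Q):\QQ]\leq B_1[k:\QQ].\]
By Merel's theorem, there is a constant $M = M(B_1[k:\QQ])$, depending only on $B_1$ and $[k:\QQ]$, such that every torsion point on any elliptic curve over any number field of degree at most $B_1[k:\QQ]$ has order at most $M$. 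Consequently $Q\in \Es_t[M!]$, a finite group of cardinality $(M!)^2$, and the set in the statement has at most $(M!)^2$ elements, a bound independent of $t$.

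The only substantive input is Merel's theorem, so there is really no obstacle to overcome; the rest of the argument is formal. I note in passing that the bound depends only on $B_1$ and $k$, not on $B_2$: the hypothesis $[k(t):k]\leq B_2$ is automatically subsumed by $[k(Q):k]\leq B_1$ via the inclusion $k(t)\subseteq k(Q)$, and the condition of good reduction enters only to ensure that $\sigma_t$ is a well-defined homomorphism and that $\Es_t$ is a genuine elliptic curve on which torsion can be discussed.
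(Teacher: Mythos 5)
Your proof is correct and, at heart, rests on the same ingredient as the paper's: Merel's uniform boundedness theorem for torsion on elliptic curves over number fields of bounded degree. The route differs in two minor but worth-noting ways. First, the paper reduces to the case $P=\Ocal$ by invoking Lemma~\ref{multbyN} (replacing $P$ by $NP$ where $N$ is the order of $P$), whereas you argue directly that $N\ell^n Q = 0$, hence $Q$ is torsion, without appealing to that lemma; both are fine, but the paper's route economically reuses a reduction it has already set up. Second, you make the sharper observation that $k(t)\subseteq k(Q)$ (since $t$ is the image of $Q$ under the $k$-rational projection $\Es\to C$), so the field over which $\Es_t$ and $Q$ are defined has degree at most $B_1[k:\QQ]$ over $\QQ$, and the hypothesis $[k(t):k]\leq B_2$ plays no role; the paper instead works with the degree bound $B_1B_2[k:\QQ]$, which is cruder but equally adequate for the finiteness conclusion. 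Your claim that $k(t)$ embeds in $k(Q)$ is correct for exactly the reason I just gave, not merely because ``$\Es_t$ is defined over $k(t)$''; it might be worth stating the projection argument explicitly.
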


\begin{proof}
In light of Lemma~\ref{multbyN}, we may replace $P$ by $NP$ for any $N\geq 1$, and so it suffices to consider the case where $P=\Ocal$ is the identity section.  But 
\[\Big\{Q\in \Es_t(\overline{k}):\ell^nQ=\Ocal_t\text{ for some }n\geq 1\Big\}\]
is  just $\Es_t[\ell^\infty]\setminus\{\Ocal_t\}$, and so in particular is a subset of $\Es_t(\overline{k})_\mathrm{Tors}$.  

A simple modification of a theorem of Merel (see \cite[Corollary~6.64]{jhs_ads} for the appropriate variant of Merel's Theorem) implies that for all elliptic curves $E/F$, where $F/\QQ$ is a number field, the set
\[\bigcup_{[L:F]\leq n}E(L)_\mathrm{Tors}\]
is bounded in cardinality by some quantity that depends only on $n$ and $[F:\QQ]$.  Since the (smooth) fibres $\Es_t$, for $[k(t):k]\leq B_2$, are all elliptic curves defined over number fields of degree at most $B_2[k:\QQ]$, invoking the claim above with $n=B_1B_2[k:\QQ]$ gives us a uniform bound on the sizes of the sets $X(k, B_1, \Es, \Ocal, t)$, for $t\in C(\overline{k})$ with $[k(t):k]\leq B_2$.
\end{proof}

We now prove Lemma~\ref{preim}, stated in the introduction, relying heavily on Lemma~4.5 of \cite{preimpaper}.

\begin{proof}[Proof of Lemma~\ref{preim}]
For any morphism of curves $\phi:X\rightarrow Y$ with ramification divisor $R_\phi$, we set
\[\rho(\phi)=\frac{\deg R_\phi}{2\deg\phi},\]
and
recall that we have assumed that there exist constants $c_1>0$ and $c_2$ such that
\[\rho(\phi_n)\geq c_12^n-c_2.\]
Lemma~4.5 of \cite{preimpaper} states that if 
\[X_0\stackrel{\phi_1}{\longleftarrow} X_1\stackrel{\phi_2}{\longleftarrow}\cdots \stackrel{\phi_N}{\longleftarrow} X_N\]
is a tower of (smooth projective) curves, equipped with non-constant morphisms, all defined over  a number field $k$, then, setting
\begin{gather*}
B_N=\min_{1\leq m\leq N} 2^{N-m}\rho(\phi_m)\\
b_N=\min_{1\leq m\leq N} \rho(\phi_m),
\end{gather*}
the set
\[\{P\in X_N(\overline{k}):[k(P):k]<B_N\text{ and }[k(\phi_1\circ\cdots\circ\phi_N(P)):k]\geq b_N\}\]
is finite.  Note that if any of the maps $\phi_n$ is unramified, the result is trivial, since in this case $B_N=b_N=0$.  Thus, we fix $n_0$ such that for $n> n_0$, $\rho(\phi_n)>\frac{1}{2}c_12^n$ (for example, we could take $n_0$ to be anything larger than $\log_2(2c_2/c_1)$, assuming $c_2>0$, or $n_0=0$ otherwise).  We then apply the lemma to the tower of curves
\[\PP^1\stackrel{\psi_1}{\longleftarrow} \PP^1\stackrel{\psi_2}{\longleftarrow} C_{n_0}\stackrel{\psi_3=\phi_{n_0+1}}{\longleftarrow}C_{n_0+1}\stackrel{\psi_4=\phi_{n_0+2}}{\longleftarrow}\cdots,\]
where the two leftmost maps are any morphisms of degree at least 2 (defined over $k$).  On the one hand, the conditions on $\phi_n$, and the fact that $\psi_1$ and $\psi_2$ are not unramified, ensure that $\rho(\psi_n)\geq \epsilon 2^n$, for some $\epsilon>0$.  Thus, for any $N\geq 1$,
\[B_N=\min_{1\leq m\leq N} 2^{N-m}\rho(\psi_m)\geq \min_{1\leq m\leq N} 2^{N-m}\epsilon 2^m=\epsilon2^N.\]
On the other hand, $\psi_1:\PP^1\rightarrow\PP^1$, and so the Hurwitz formula guarantees that
\[b_N\leq\rho(\psi_1)=\frac{2d-2}{2d}<1.\]
By the lemma from \cite{preimpaper}, we know that for any $N>n_0$, the set
\[\{P\in C_N(\overline{k}):[k(P):k]<\delta2^{N}\text{ and }[k(\psi_1\circ\cdots\circ\psi_N(P)):k]\geq 1\}\]
is finite, with $\delta=\epsilon 2^{2-n_0}$.  Since the condition $[k(\psi_1\circ\cdots\circ\psi_N(P)):k]\geq 1$ is trivial, then, we have proven the lemma (with the more explicit information that it suffices to take $N$ larger than $\log_2(B/\delta)$).
\end{proof}

We now proceed with the body of the proof of the main result.  Let $k$, $\Es$, $P$, $\ell$, $B_1$, and $B_2$ be as in the statement of Theorem~\ref{main}.  In light of Lemma~\ref{merel}, we will suppose that $P$ is a section of infinite order.  The $j$-invariant $j_\Es:C\rightarrow\PP^1$ was assumed non-constant, and hence is dominant.  In particular, over $\CC$, it must have a pole.  If necessary, we may replace $\Es$ with an elliptic surface $\Es'$, birational to a base extension of $\Es$, such that this pole is a place of multiplicative reduction for $\Es'$.  After replacing $\Es$ with this base extension (employing Lemma~\ref{base}), the generic fibre $E$  of $\Es$, over $K=k(C)$, has split multiplicative reduction at $v$, over $K_\CC=K\otimes_k\CC$.  Finally, in light of Lemma~\ref{multbyN}, we will also replace  $P$ by $v(j_E)P$,  in order to ensure that $P\in E_0(\widehat{K_\CC})$, where $\widehat{K_\CC}$ is the completion of $K_\CC$ at $v$.

Let $\Gamma_0\subseteq \Es$ be the image of $P$ and, as in the introduction, let $\Gamma_{n+1}=[\ell]^{-1}\Gamma_n$.  If $t\in C(\overline{k})$ has $[k(t):k]\leq B_1$, and $Q\in \Es_t(\overline{k})$ has $[k(Q):k]\leq B_2$, and $\ell^n Q=P_t$, then the pair $(Q, t)$ corresponds to a point on $\Gamma_n$ of degree at most $B=B_1B_2$ over $k$, and hence a point of degree at most $B$ on the normalization $\widetilde{\Gamma}_n$.  We will use Lemma~\ref{preim} to bound $n$.

First of all, note that since the tree of components of the curves $\widetilde{\Gamma}_n$ contains only finitely many paths (by Lemma~\ref{orbits}), it suffices to prove the result for any of the finitely many distinct towers
\[C_0\stackrel{\phi_0}{\longleftarrow}C_1\stackrel{\phi_1}{\longleftarrow}\cdots,\]
where $C_n\subseteq \widetilde{\Gamma}_n$ is irreducible, and $\phi_n$ is the map induced by $[\ell]:\Es\rightarrow\Es$.  In particular, we may choose $n_0$ large enough that for all $n\geq n_0$, $C_{n+1}$ is the \emph{only} component of $\widetilde{\Gamma}_{n+1}$ which maps, by the map $\widetilde{\Gamma}_{n+1}\rightarrow\widetilde{\Gamma}_n$ induced by $[\ell]:\Es\rightarrow\Es$, to $C_n$.  Now suppose that $w$ is a place above $v$ corresponding to a point on $C_{n_0}$.  We claim that there is a place $w'\mid w$, corresponding to a point of a curve of $C_n$, for some $n\geq n_0$, which has Type C (in the sense of Section~\ref{tate}).  Of course, if $w$ itself has Type C, then we're done.  Suppose that the $w$ is a node in the ramification tree of Type B$_r$, for some $0\leq r\leq m$.  By Lemma~\ref{ramificationtree}, then, there is a node of Type C above $w$ if $w$ has Type B$_0$, and a node of Type B$_{r-1}$ above $w$ otherwise.  By induction, then, we eventually have a node $w'\mid w$ of Type C in the ramification tree.  Finally, if $w$ has Type A, then there is some node of Type B$_0$ above $w$, and we apply the previous case.

So, increasing $n_0$ if necessary, and replacing $w$ with the node of Type C above it, we may simply assume that $w$ has Type C.  Now, every Type C node in the ramification tree splits into $\ell$ Type C nodes with ramification index $\ell$, at the next level of the tree.  Thus, the curve $C_{n_0+m}$ contains $\ell^m$ distinct points above $w$, all of Type C.   In other words, the ramification divisor of the morphism
\[\phi_{n_0+m}:C_{n_0+m}\rightarrow C_{n_0+m+1}\]
has degree at least $\ell^m(\ell-1)$.  Since each of the maps $\phi_n$ has degree at most $\ell^2$, it follows that for $n> n_0$,
\[\rho(\phi_n)\geq\frac{\ell^{n-n_0}(\ell-1)}{\ell^2}\geq \ell^n\left(\frac{\ell-1}{\ell^{n_0+2}}\right).\]
Since there are only finitely many $n\leq n_0$, this shows that there are constants $c_1>0$ and $c_2$ such that
\[\rho(\phi_n)\geq c_12^n-c_2\]
(in fact, we could replace the $2^n$ with $\ell^n$, but this provides no gains after the application of Lemma~\ref{preim}).  We are now in a position to apply Lemma~\ref{preim}.  The lemma tells us that for any $B\geq 1$, there is an $N(B)$ such that $C_{N(B)}(\overline{k})$ contains only finitely many points $z$ with $[k(z):k]\leq B$.  Applying the argument to each of the finitely many towers of components of the curves $\widetilde{\Gamma}_n$ proves the same thing for those curves.

 Given $B\geq 1$, let $Y_B\subseteq C(\overline{k})$ be the (finite) set of $t$ corresponding to points in $\Gamma_{N(B)}(\overline{k})$ of degree at most $B$.
Now suppose that $t\in C(\overline{k})$, and $Q\in\Es_t(\overline{k})$, with $\ell^n Q=P_t$ and $[k(t, Q):k]\leq B$.  If $t\not\in Y_B$, then $n\leq n_0$.  The number of points $Q$ is at most
\begin{eqnarray*}
\#\ell^{-1}P_t+\#\ell^{-2}P_t+\cdots+\#\ell^{-n_0}P&=&\ell^2+\ell^4+\cdots+\ell^{2n_0(B)}\\
&=&\frac{\ell^{2n_0(B)+1}-\ell^2}{\ell^2-1},
\end{eqnarray*}
which does not depend on $t$.

Now suppose that $t\in Y_B$, and that $\Es_t$ is non-singular.  If $\hat{h}_t:\Es_t(\overline{k})\rightarrow\RR^+$ is the N\'{e}ron-Tate height on the elliptic curve $\Es_t$, then for any $Q\in \Es_t(\overline{k})$ with $\ell^nQ=P_t$, for some $n\geq 1$, we have \[\h_t(Q)=\ell^{-2n}\h_t(P_t)\leq \h_t(P_t).\]  Thus, the number of points in $\Es_t(\overline{k})$ with $\ell^n Q=P_t$ for some $n\geq 1$, and $[k(Q):k]\leq B_1$ is finite.  So, since $Y_B$ is finite, we have an upper bound on the size of the set
\[\Big\{Q\in \Es_t(\overline{k}):[k(Q):k]\leq B_1\text{ and }\ell^nQ=P_t\text{ for some }n\geq 1\Big\},\]
for $t\in C(\overline{k})$ with $[k(t):k]\leq B_2$, whether $t\in Y_B$ or not.  This proves Theorem~\ref{main}.


\section{The proof of Theorems~\ref{special_th} and \ref{torsion}}\label{sharp}

As mentioned in the introduction, Theorem~\ref{main} cannot be particularly improved, since we are always free to replace $P$ be $\ell^NP$, for some $N$, thereby arbitrarily  increasing the number of rational points in $\bigcup [\ell]^{-n}P_t$, on each fibre.

Requiring that $\ell$ be non-special, we can prove the stronger claim of Theorem~\ref{special_th}, namely that if $P$ is not of the form $\ell P_0$, for any section $P_0:C\rightarrow \Es$, then
\[\#\Big\{Q\in \Es_t(k):\ell^n Q=P_t\text{ for some }n\geq 1\Big\}\leq \ell^2,\]
for all but finitely many places $t\in C(k)$, provided that $\Es$ has at least one multiplicative fibre.  Taking a cue from work of Baragar and McKinnon \cite{mckinnon}, we note that we may replace the upper bound with 0  if $\Es$ has  at least 5 distinct multiplicative fibres (4 multiplicative fibres suffice if $\ell=5$, or 3  if $\ell\geq 7$).  We should note that, since the group of sections on $\Es$ is finitely generated, $P$ is an $\ell$th multiple of another section only for finitely many primes $\ell$. 

The result follows from the Mordell Conjecture (now a theorem of Faltings), once one shows that the curve $\widetilde{\Gamma}_2$ (or, with the additional hypotheses, $\widetilde{\Gamma}_1$) is irreducible, and has genus at least 2.  For if this is the case, then there are only finitely many fibres on which $[\ell]^{-2}P_t$ (respectively, $[\ell]^{-1}P_t$) contains any $k$-rational points at all.  The result follows, since $[\ell]^{-1}P_t$ contains at most $\ell^2$ points.  Thus, Theorem~\ref{special_th} is proven once we establish:

\begin{lemma}
Let $\ell$ be a non-special prime for $\Es$, and let $P:C\rightarrow\Es$ be a section which is \emph{not} an $\ell$th multiple, and suppose that $j_\Es$ is non-constant.  Then $\widetilde{\Gamma}_2$ is an irreducible curve of genus at least 4.
If we suppose, additionally, that $j_\Es:C\rightarrow\PP^1$ has at least 5 distinct poles over $\CC$ (at least 4 poles if $\ell=5$, or at least 3 poles if $\ell\geq 7$), then  $\widetilde{\Gamma}_1$ is an  irreducible curve of genus at least 2.
\end{lemma}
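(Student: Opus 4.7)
The plan is to handle irreducibility and the genus bounds separately, combining the Galois-theoretic results of Section~\ref{galois} with the explicit ramification tree of Section~\ref{tate}.

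For irreducibility I would invoke Lemma~\ref{hlemma}: since $\ell$ is non-special, $H_\infty = T_\ell(E)$, and then Lemma~\ref{orbits} forces $[\ell]^{-n}P$ to be a single Galois orbit over $K_\infty$ (hence over $K_\CC$ and $K$) for every $n \geq 1$, giving geometric irreducibility of each $\widetilde{\Gamma}_n$. One technical gap is that Lemma~\ref{hlemma} requires $P \notin \ell E(K_\CC)$, whereas the hypothesis provides only that $P$ is not an $\ell$-th multiple of a section defined over $k$; this can be closed by enlarging $k$ to a finite extension over which the full (and finitely generated, by Lang-N\'{e}ron) group $E(K_\CC)$ is rational, and then arguing via a Galois-averaging over the conjugates of an alleged $P_0' \in E(K_\CC)$ with $\ell P_0' = P$ that $P \in \ell E(K_\CC)$ would force $[K':K] \cdot P \in \ell E(K)$, which after choosing $[K':K]$ coprime to $\ell$ contradicts $P \notin \ell E(K)$.

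For the genus bounds I would first run the standard reductions from the proof of Theorem~\ref{main}: base-change so that $\Es$ has split multiplicative reduction at some place $v$, and replace $P$ by $NP$ with $N$ prime to $\ell$ so that $P_v$ lies on the identity component of the N\'{e}ron model at each multiplicative fibre. (Non-specialness of $\ell$ means every component group at a multiplicative fibre has order coprime to $\ell$, so such an $N$ exists, and the hypothesis on $P$ is preserved.) Under these reductions, every multiplicative fibre $v$ has $m = \ord_\ell(v(j_E)) = 0$ and its root in the ramification tree is Type B$_0$; Lemma~\ref{ramificationtree} then gives that each such fibre contributes at least $(\ell-1)^2$ to $\deg R_{\phi_1}$ (from the $\ell-1$ Type C nodes of weight $\ell$ at level $1$) and at least $2\ell(\ell-1)^2$ to $\deg R_{\phi_2}$ (namely $\ell \cdot (\ell-1)^2$ above the $\ell$ intermediate B$_0$ nodes, plus $(\ell-1) \cdot \ell(\ell-1)$ above the $\ell-1$ intermediate C nodes). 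Writing $g_n = g(\widetilde{\Gamma}_n)$, Riemann-Hurwitz for $\phi_2$ together with the trivial bound $g_1 \geq 0$ and a single multiplicative fibre yields
\[2g_2 - 2 \geq -2\ell^2 + 2\ell(\ell-1)^2,\]
and the factorization $\ell(\ell-1)^2 - \ell^2 - 3 = (\ell-3)(\ell^2+1) \geq 0$ for $\ell \geq 3$ gives $g_2 \geq 4$. For the second assertion, with $N$ distinct poles of $j_\Es$ (all multiplicative after the base change), Riemann-Hurwitz for $\phi_1$ reads
\[2g_1 - 2 \geq -2\ell^2 + N(\ell-1)^2,\]
and $g_1 \geq 2$ whenever $N \geq (2\ell^2+2)/(\ell-1)^2$; this threshold equals exactly $5$ at $\ell = 3$, is strictly below $4$ at $\ell = 5$, and is strictly below $3$ for every $\ell \geq 7$, matching the numerics in the statement.

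I expect the main obstacle to be not the Riemann-Hurwitz computation, which is essentially determined once Lemma~\ref{ramificationtree} is in hand, but the bookkeeping around the reductions: verifying that $P \notin \ell E(K_\CC)$ is inherited after the base change, that the prime-to-$\ell$ multiplication really does place all $P_v$ on the identity component simultaneously, and that the contribution from a \emph{single} multiplicative fibre already suffices for $g_2 \geq 4$ uniformly in $\ell$.
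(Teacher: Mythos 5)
Your irreducibility argument and the Riemann--Hurwitz arithmetic (including the factorization $\ell(\ell-1)^2-\ell^2-3=(\ell-3)(\ell^2+1)$ and the thresholds $N\geq 5,4,3$) agree with the paper's. But the two reductions you import from the proof of Theorem~\ref{main} do not transfer to a genus computation, and this is precisely where the paper's proof goes a different route.

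The base-change step is a genuine gap. Lemma~\ref{base} is a statement about counting points of bounded degree, not about genera, and the implication runs the wrong way for you: if $\Es'=\Es\times_C C'$, the induced finite dominant morphism $\widetilde{\Gamma}_2'\rightarrow\widetilde{\Gamma}_2$ gives $g(\widetilde{\Gamma}_2')\geq g(\widetilde{\Gamma}_2)$ by Riemann--Hurwitz, so proving $g(\widetilde{\Gamma}_2')\geq 4$ for the base-changed surface says nothing about $g(\widetilde{\Gamma}_2)$. In exactly the case where you would want to base-change --- every pole of $j_\Es$ a place of additive reduction --- your argument produces no bound on the curve the lemma is actually about. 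The paper avoids this: it passes to the quadratic twist $K'/K_\CC$ only as a local device, notes $v=(v')^2$ is totally ramified in $K'/K_\CC$, uses $K'(Q_1)=K'F_1$ to see that each place of $F_1$ above $v$ has a unique prolongation to $K'(Q_1)$, and thereby translates the ramification indices from the twist back to $e_w(F_1/K_\CC)$, recovering the same $\ell$ places of index $1$ and $\ell-1$ of index $\ell$ at every pole of $j_\Es$, multiplicative or not. The $P\mapsto NP$ reduction is a smaller issue but also not free: Lemma~\ref{multbyN} again compares point counts, and replacing $P$ by $NP$ replaces $\widetilde{\Gamma}_n$ by a different curve; the genus bound does transfer (via the finite dominant map $[N]:\widetilde{\Gamma}_n^{(P)}\rightarrow\widetilde{\Gamma}_n^{(NP)}$), but you would have to say so. The paper instead handles $P\not\in E_0(\widehat{K_\CC})$ directly by writing $P=\phi(uq_0^{\,p})$ with $q=q_0^{\,m}$, $0<p<m$, and observing that since $m$ is prime to $\ell$ the map $a\mapsto p+am$ permutes $\ZZ/\ell^2\ZZ$, so the ramification picture is identical to the $P\in E_0$ case. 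Finally, your Kummer-averaging ``fix'' for upgrading ``not an $\ell$th multiple of a $k$-section'' to $P\not\in\ell E(K_\CC)$ requires choosing $[K':K]$ prime to $\ell$, which is not at your disposal; that hypothesis in the lemma should just be read geometrically (over $K_\CC$), as the paper implicitly does.
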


\begin{proof}
The irreducibility follows from the results in Section~\ref{orbits}.  In particular, since $\ell$ is not a special prime, Lemma~\ref{open_image} tells us that Galois group of the covering $\widetilde{\Gamma}_n\rightarrow\Gamma_0$ is isomorphic (in the natural way) to a semi-direct product $\left(\ZZ/\ell^n\ZZ\right)^2\rtimes\SL_2(\ZZ/\ell^n\ZZ)$.  In particular, the action is transitive, and the curve $\widetilde{\Gamma}_n$ is irreducible.

For convenience, we will work in the function field setting, considering the generic fibre $E/K_\CC$.  We will denote the function fields (over $\CC$) of $\widetilde{\Gamma}_1$ and $\widetilde{\Gamma}_2$ by $F_1$ and $F_2$, respectively.

Since $j_\Es:C\rightarrow\PP^1$ is not constant, it is dominant, and so must have a pole.  Let $v$ be a pole of $j_\Es:C\rightarrow\PP^1$, and suppose for the time being that  $E$ has multiplicative reduction at $v$.
If we have $P\in E_0(\widehat{K_\CC})$ then, by Lemma~\ref{ramificationtree} (since $v(j_\Es)$ is prime to $\ell$), the place $v$ is a point of Type B$_0$, in the terminology of Section~\ref{tate}.  A quick examination of the structure of the tree (referring to Lemma~\ref{ramificationtree}) shows that there are $\ell$ places of Type B$_0$ of $F_1$ above $v$, and above each of these there are $\ell-1$ places $w$ of $F_2$ with $e_w(F_2/F_1)=\ell$.  Similarly, there are $\ell-1$ places of Type C for $F_1$ above $v$, and above each of these, $\ell$ places $w$ of $F_2$  with $e_w(F_2/F_1)=\ell$.  Thus the map (of degree $\ell^2$)  $\widetilde{\Gamma}_2\rightarrow\widetilde{\Gamma}_1$ has ramification divisor of degree at least $2\ell(\ell-1)^2$.  By the Riemann-Hurwitz formula,
\[2\left(g(\widetilde{\Gamma}_2)-1\right)\geq\ell^22\left(g(\widetilde{\Gamma}_1)-1\right)+2\ell(\ell-1)^2,\]
or, using the trivial bound $g(\widetilde{\Gamma}_1)\geq 0$,
\[g(\widetilde{\Gamma}_2)\geq\ell^3-3\ell^2+\ell+1\geq 4\]
(recall that $\ell\geq 3$).

  If $P\not\in E_0(\widehat{K_\CC})$, then we cannot apply Lemma~\ref{ramificationtree}.  However, the general approach of Section~\ref{tate} still applies.  If
\[0\longrightarrow q^{\ZZ}\longrightarrow \widehat{K_\CC}\stackrel{\phi}{\longrightarrow} E(\widehat{K_\CC})\longrightarrow 0\]
is the Tate uniformization of $E$ at $v$, as in Section~\ref{tate}, then set $m=v(q)=-v(j_\Es)$, and write $q=q_0^m$, for some $q_0\in\widehat{K_\CC}^*$.  Since $q_0$   is a uniformizer for $v$, we may write  $P=\phi(u q_0^p)$, for some $0< p< m$ and some $v$-unit $u$ (we may take $0<p<m$ because $q^\ZZ=\ker(\phi)$).  The places of $F_2$ above $v$ correspond to decomposition orbits (relative to a fixed prolongation of $v$) of points of the form
\[Q'=\phi(u^{1/\ell^2}q_0^{(p+am)/\ell^2}\zeta_{1/\ell^2}^b),\]
for $a$ and $b\in\ZZ/\ell^2\ZZ$.  By hypothesis, $m$ is prime to $\ell$, and so the function $q\mapsto p+am$ simply permutes $\ZZ/\ell^2\ZZ$.  In other words, the places of $F_2$ above $v$ simply correspond to the decomposition orbits of points of the form
\[Q'=\phi(u^{1/\ell^2}q_0^{a/\ell^2}\zeta_{1/\ell^2}^b),\]
for $a$ and $b\in\ZZ/\ell^2\ZZ$.  Exactly as in Section~\ref{tate}, the $\ell(\ell-1)$ choices of $a$ such that $a\not\equiv 0\MOD{\ell^2}$ each yield a place $w$ of $F_2$ for which  $e_w(F_2/F_1)=\ell$.  The $\ell-1$ choices of $a\equiv 0\MOD{\ell}$ but $a\not\equiv 0\MOD{\ell^2}$ give $\ell(\ell-1)$ places $w$ of $F_2$ for which $e_w(F_2/F_1)=\ell$.  Just as in the previous case, we obtain
\[g(\widetilde{\Gamma}_2)\geq\ell^3-3\ell^2+\ell+1\geq 4.\]

Now we suppose that $v$ is a pole of $j_\Es:C\rightarrow\PP^1$, but that $v$ is a place of additive reduction. Then  there is some quadratic extension $K'/K_\CC$, and an elliptic curve $E'/K'$ which is $K'$-isomorphic to $E$, such that $E'$ has multiplicative reduction at $v$  (see \cite[p.~442]{jhs_advanced}).  Moreover,  $v=(v')^2$ ramifies in this extension.  Let $Q_1, Q_2\in E'(\overline{K'})$ with $\ell^2Q_2=\ell Q_1=P$.  We have shown that there are $\ell$ places $w$ of $K'(Q_1)$ above $v'$ with $e_w(K'(Q_1)/K')=1$, and $\ell-1$ with $e_w(K'(Q_1)/K')=\ell$.  In other words, there are $\ell$ places $w$ of $K'(Q_1)$ above $v$ with $e_w(K'(Q_1)/K)=2$, and $\ell-1$ with $e_w(K'(Q_1)/K)=2\ell$.  But $K'(Q_1)=K'F_1$, and so a prime of $K'(Q_1)$ is totally ramified in the extension $K'/K$ only if it is in the extension $K'(Q_1)/F_1$.  In particular, every prime $w$ of $K'(Q_1)$ above $v$ satisfies $e_w(K'(Q_1)/F_1)=2$.  Thus each place $w$ of $F_1$ above $v$ extends uniquely to a place $w'$ of $K'(Q_1)$, and we have $e_w(F_1/K)=\frac{1}{2}e_{w'}(K'(Q_1)/K)$.  It follows that there are $\ell$ place $w$ of $F_1$ with $e_w(F_1/K)$, and $\ell-1$ with $e_w(F_1/K)=\ell$.  After conducting the same analysis for the extension $F_2/F_1$, we see that the factorizations of $v$ in these extensions are identical to the previous case.  The Hurwitz formula again gives $g(\widetilde{\Gamma}_2)\geq 4$.

Now we consider the genus of $\widetilde{\Gamma}_1$, in terms of the number of poles of $j_\Es$.
By the arguments above, if $v$ is a pole of $j_\Es$, then there are at least $\ell-1$ places of $\widetilde{\Gamma}_1$  above $v$ at which the map $\widetilde{\Gamma}_1\rightarrow \widetilde{\Gamma}_0$ is ramified with index $\ell$.  Thus, the ramification divisor of the map $\widetilde{\Gamma}_1\rightarrow \widetilde{\Gamma}_0$ has degree at least $N(\ell-1)^2$, where $N$ is the number of distinct poles of $j_\Es$ (over $\CC$).  By The Riemann-Hurwitz formula (since the morphism $\widetilde{\Gamma}_1\rightarrow\widetilde{\Gamma}_0$ has degree $\ell^2$), we have
\[g(\widetilde{\Gamma}_1)\geq 1-\ell^2+\frac{1}{2}N(\ell-1)^2.\]
This is of no use to us if $N\leq 2$, but one can check that if $N\geq 5$, then $g(\ell)\geq 2$ for all $\ell\geq 3$.  Furthermore, if $N\geq 4$, then $g(\ell)\geq 8$ for $\ell\geq 5$, and if $N\geq 3$, then $g(\ell)\geq 6$ for all $\ell\geq 7$.

If $\Gamma_0\cong C$ has genus at least 1, then the estimate on the ramification of the map $\widetilde{\Gamma}_1\rightarrow\widetilde{\Gamma}_0$, and the Hurwitz formula, gives
\[g(\widetilde{\Gamma}_1)\geq 1+\frac{1}{2}(\ell-1)^2\geq 3\]
for $\ell\geq 2$, regardless of the number of poles of $j_\Es$ (provided that $j_\Es$ is non-constant).
\end{proof}

The remark after the statement of Theorem~\ref{special_th} is proved by a similar argument.  In particular, in this simplified case, one know that all of the curves $\widetilde{\Gamma}_n$ are irreducible, and an examination of the ramification tree shows that the map
\[\phi_{n+1}:\widetilde{\Gamma}_{n+1}\rightarrow\widetilde{\Gamma}_{n}\]
has ramification of degree at least $(n+1)\ell^n(\ell-1)^2$.   Lemma~4.5 of \cite{preimpaper} now implies the an upper bound on $n$ such that $[\ell]^{-n}P_t$ contains points of degree at most $D$ on infinitely many fibres. The lower bound comes from observing that the points in $[\ell]^{-n}P_t$ each have degree at most $\ell^{2n}$ over $k$.

\begin{proof}[Proof of Theorem~\ref{torsion}]
Let $\Es$ and $S$ be as in the statement of the theorem, and suppose that $\Es_t(k)$ contains a point of order $N$, for $N$ not an $S$-unit.  Then $\Es_t(k)$ contains a point of order $\ell$, for some prime $\ell\not\in S$.  Note that, by Merel's Theorem, $\ell$ is bounded in terms of $[k:\QQ]$.  Thus, it suffices to show that the set of $t\in C(k)$ such that $\Es_t(k)$ contains a point of order $\ell$, for any given $\ell\not\in S$, is finite.


Now, fix $\ell\not\in S$.  Since $E[\ell]\setminus\{\Ocal\}$ is Galois-irreducible, the curve $[\ell]^{-1}\Ocal$ has two components, one of which is birational to $C$ (this is the curve on $\Es$ corresponding to $\Ocal$).  Let $C'$ be the normalization of the component birational to $C$, and let $\Gamma$ be the normalization of the other component.  If $j_\Es$ has a pole at the place $v$ on $C$, then (by the same argument as in the proof of Theorem~\ref{special_th}) there are a total of $\ell$ places above $v$, on the union of these two curves, at which the map induced by $[\ell]$ is unramified, and $\ell-1$ at which the map has ramification index $\ell$.  Since $C'\cong C$, all of the ramified places must lie on $\Gamma$, and so the places above $v$ contribute $(\ell-1)^2$ to the degree of the ramification divisor of the map $\Gamma\rightarrow C$ (this map has degree $\ell^2-1$).

If $C$ has genus 1 (or greater), then the existence of a single pole of $j_\Es$ gives the lower bound
\[g(\Gamma)\geq1+\frac{1}{2}(\ell-1)^2\geq 3,\]
for $\ell\geq 3$.  Otherwise, the existence of $N$ distinct poles of $j_\Es:C\rightarrow\PP^1$ gives a lower bound of
\[g(\Gamma)\geq 1-(\ell^2-1)+\frac{N}{2}(\ell-1)^2\geq 3\]
for $N\geq 5$ and $\ell\geq 3$.
\end{proof}

It is worth remarking that, in the notation of the proof above, there is an obvious map $\Gamma\rightarrow X_1(\ell)$, where $X_1(\ell)$ is the usual modular curve.  Thus, we always have $g(\Gamma)\geq g(X_1(\ell))\geq 2$ for $\ell\geq 13$.


\section{The proof of Theorem~\ref{cheb}}\label{chebproof}

Finally, we prove Theorem~\ref{cheb}.  Let $E/k$ be an elliptic curve defined over a number field, and fix a rational prime $\ell$ and a value $M\geq 0$.  Let $n\geq 1$ be the least integer such that
$M<
\ell^{2n}$,
and let $F=k(E[\ell^n], [\ell]^{-n}P)$.  Then $F/k$ is a Galois extension, and if $\pf$ is a prime of $k$
 whose Frobenius element in $\Gal(F/k)$ is trivial, then $E_\pf(k_\pf)$ contains  $E_\pf[\ell^n]$, and contains a point $Q$ such that $\ell^nQ=P_\pf$.  In particular, for each $m\leq n$,  there are at least $\ell^{2m}$
values $R\in E_\pf(k_\pf)$ such that $\ell^m R=P_\pf$ for some $m\geq 1$. 
Thus, there are at least
\[\ell^{2n}+\ell^{2(n-1)}+\cdots+\ell^2=\frac{\ell^{2(n+1)}-1}{\ell^2-1}-1\]
points $R\in E_\pf(k_\pf)$ such that $\ell^mR=P_\pf$, for some $m\geq 1$.
  On the other hand, the density of this set of primes (removing the finitely many bad primes) is at least $[F:k]^{-1}$, by the Chebotarev density theorem.  Since $\Gal(F/k)$ embeds in $E[\ell^n]\rtimes\GL_2(\ZZ/\ell^n\ZZ)$, we have
$$[F:k]\leq \#\left(E[\ell^n]\rtimes\GL_2(\ZZ/\ell^n\ZZ) \right)\leq \ell^{2n}\times \ell^{3n-2}(\ell^2-1)\leq \ell^{5n}.$$
Thus, since $M\geq\ell^{2n-2}$,  our chosen set of primes has density at least
\[M^{-\frac{2}{5}(1-\frac{1}{n})}\geq M^{-\frac{2}{5}(1-\frac{2\log\ell}{\log M})}.\]

It is worth noting that, since an analogue of the Chebotarev density theorem is true for function fields in positive characteristic \cite{murty}, the same is true of Theorem~\ref{cheb}.

\end{document}